\documentclass[11pt]{article}

\usepackage{amsmath}
\usepackage{graphicx}
\usepackage{yhmath}
\usepackage{mathdots}
\usepackage[nottoc,numbib]{tocbibind}
\usepackage{mathtools}
\usepackage[all,cmtip]{xy}
 \usepackage{amsfonts}
\usepackage{amsthm}
\usepackage{amsmath,amscd}

\usepackage{amssymb}
\usepackage{tikz}
\usetikzlibrary{arrows.meta, decorations.markings}

\usepackage[margin=1in]{geometry}
\usepackage[T1]{fontenc}
\usepackage[utf8]{inputenc}
\usepackage{lmodern}
\usepackage{microtype}
\usepackage{amsmath,amssymb,amsthm,mathtools}
\usepackage{enumitem}
\usepackage{tikz-cd}
\usepackage{array}
\usepackage{xcolor}
\usepackage{hyperref}

\hypersetup{
  colorlinks=true,
  linkcolor=blue!50!black,
  citecolor=blue!50!black,
  urlcolor=blue!50!black
}

\newcommand{\K}{\Bbbk}
\newcommand{\rad}{\mathfrak r}
\newcommand{\In}{\operatorname{In}}
\newcommand{\Out}{\operatorname{Out}}

\theoremstyle{plain}
\newtheorem{theorem}{Theorem}[section]
\newtheorem{proposition}[theorem]{Proposition}

\newtheorem{corollary}[theorem]{Corollary}

\theoremstyle{definition}
\newtheorem{definition}[theorem]{Definition}
\newtheorem{construction}[theorem]{Construction}
\newtheorem{example}[theorem]{Example}
\newtheorem{nonexample}[theorem]{Non-example}
\newtheorem{question}[theorem]{Question}

\theoremstyle{remark}
\newtheorem{remark}[theorem]{Remark}

\title{On the Mutations of  Gentle  Quivers and Admissible Ideals}
\author{Ibrahim Saleh}
\date{\today}

\begin{document}
\maketitle

\begin{abstract}

Fomin–Zelevinsky quiver mutation is an essential tool in the theory of cluster algebras. In this paper, we extend this framework to quivers with relations by introducing an ``involutive'' mutation operation on admissible ideals of path algebras.
 Starting with a bound quiver algebra $(Q,I)$, we extend quiver mutation $\mu_{k}$ so that $(\mu_k(Q),I')$ is again a bound quiver algebra. Throughout this process, we used the \emph{permitted transition graph} associated with each pair $(Q,I)$, which provides a visual test to determine if $(Q,I)$ is a bound quiver algebra. In addition, we give criteria ensuring that $(\mu_k(Q),I')$ is gentle whenever $(Q,I)$ is a gentle algebra, and we identify classes of quivers for which gentleness is preserved under mutation. Finally, we briefly explore some relations among the families of gentle algebras obtained by mutating an initial gentle algebra.

\end{abstract}

\noindent\textbf{Keywords.} gentle algebra; bound quiver; quiver mutation; zero-relation; cluster quiver; admissible ideal.\par
\medskip
\noindent\textbf{Mathematics Subject Classification.} 16G20, 16G60, 13F60.

\tableofcontents

\section{Introduction}

Mutation is the central concept in the theory of cluster algebra.
Introduced by Fomin and Zelevinsky [3], it is a combinatorial operation for
generating  cluster structures using an initial quiver. 

Gentle algebras, introduced by Assem and Skowroński [1], satisfy some combinatorial criteria on quivers and local quadratic relations, called admissible ideals, (Definitions 3.2, 3.3). Gentle algebras play an important role in the theory of finite-dimensional algebras, and   are closely related to string and special biserial
algebras, and occur naturally in the study of surfaces, derived categories,
and cluster theory.  

In this paper we extend mutation beyond quivers so that it also acts on relations.  Starting with an ``initial'' bound quiver \((Q,I)\), we
construct, a class of new bound quivers by applying a single ``\emph{extended}'' mutation $\mu_{k}$. And we identify which bound quiver produced by applying $\mu_{k}$ on \((Q,I)\) such that $\mu_{k}(\mu_k(Q),I')=(Q,I)$.  For every pair  \((Q,I)\) we associate a \emph{permitted transition graph} which is used to determine the admissibility of the ideal $I$. The permitted transition graph and some local matching conditions are key tools in the construction of the  mutation on relations for producing new admissible ideals. We also generalize this process to  gentle algebras as well, when applicable.

The admissibility  is determined by the permitted
transition graph: by Proposition~\ref{prop:admissibility}, a quadratic
monomial ideal is admissible precisely when its permitted transition graph has
no oriented cycle. This makes it possible to construct the mutated ideal by
choosing permitted transitions first and then defining the zero-relations as
the complement, as in Construction~\ref{constr:admissibilityfirst} and
Proposition~\ref{prop:admissibility-first-guarantee}.  Once the mutated quiver
is \emph{degree-gentle}, the same trick can be refined to the gentle setting:
Proposition~\ref{prop:gentle-admissible-completion} shows that an acyclic
local matching yields a gentle completion, while
Proposition~\ref{prop:decorated-admissibility-reversible} and
Proposition~\ref{prop:decorated-gentle-involutive} show that the resulting
admissibility and gentle mutations become reversible after passing to
decorated bound quivers.  The uniqueness statements in
Proposition~\ref{prop:unique-admissibility-reversible} and
Proposition~\ref{prop:uniqueness-reversible} identify the cases in which the
undecorated constructions are already reversible.

Quivers of finite representations type  provide favorable situations for these constructions.  In the case of type \(A_n\), the
mutation procedure to produce gentle algebras can often be completed successfully, and it gives a large
family of admissible and gentle examples.  In general, these findings suggest that the relevant obstructions are  local, determined by specific subquiver, in particular
 whether the mutated quiver admits an acyclic choice of its permitted transitions graph  and, in the gentle case,
whether this choice satisfies the local matching condition.

Briefly our main motive, is to develop  systematic framework for producing new admissible and gentle bound quiver algebras from
given ones. So, for every cluster structure we associate classes of bound and gentle algebras. Finally we  also suggest several directions for future work, including
the study of mutation classes of gentle algebras, the relation with derived
equivalence, and the possibility of developing a broader decorated mutation
theory for bound quivers with relations.

\section{Preliminaries}

Throughout the paper, \(\K\) denotes a field of zero characteristic.  We begin by recalling the basic
notions of quivers and path algebras that will be used in the rest of the paper.

\begin{definition}[Quivers and paths]
A \emph{quiver} \(Q\) is a finite directed graph consisting of a set of
vertices \(Q_0\), a set of arrows \(Q_1\), and source and target maps
\[
s,t:Q_1\to Q_0.
\]
If \(\alpha\in Q_1\), then \(s(\alpha)\) is the starting vertex of \(\alpha\)
and \(t(\alpha)\) is its ending vertex.

A \emph{path} in \(Q\) is a sequence of arrows
\[
p=\alpha_n\cdots \alpha_2\alpha_1
\]
such that \(t(\alpha_i)=s(\alpha_{i+1})\) for all \(1\le i<n\).  The length of
\(p\) is \(n\).  For each vertex \(i\in Q_0\), we also regard the trivial path
\(e_i\) at \(i\) as a path of length \(0\). A \emph{loop} at vertex \(i\), is a non-trivial arrow from  \(i\) to itself. 

If \(p=\alpha_n\cdots \alpha_1\) and \(q=\beta_m\cdots \beta_1\) are paths with
\(t(p)=s(q)\), then their concatenation is the path
\[
qp=\beta_m\cdots \beta_1\alpha_n\cdots \alpha_1.
\]
If \(t(p)\ne s(q)\), then the product \(qp\) is not defined.
\end{definition}

\begin{definition}[Path algebras]
 
The \emph{path algebra} \(\K Q\) is the \(\K\)-vector space having as basis all
paths in \(Q\), with multiplication induced by concatenation of paths when the
product is defined and \(0\) otherwise.  In particular,
\[
e_i e_i=e_i, \ \    \ \ e_ie_j=0, \  \text{if} \  \  i\ne j,
\qquad \text{and} \ \ 
e_{t(\alpha)}\alpha=\alpha=\alpha e_{s(\alpha)}
\]
for every arrow \(\alpha\in Q_1\).

\end{definition}

\begin{definition}
Let $Q$ be a finite connected quiver. The \emph{arrow ideal} of \(\K Q\), denoted \(\rad_Q\), is the ideal generated by
all arrows of \(Q\).  Equivalently, \(\rad_Q\) is the \(\K\)-subspace spanned by
all paths of positive length.  
 As a vector space, the arrow ideal can be decomposed as
\[
\rad_{Q} = \K Q_{1} \oplus \K Q_{2} \oplus \cdots \oplus \K Q_{l} \oplus \cdots ,
\]
where \(\K Q_{l}\) is the subspace of \(\K Q\) with the set $Q_{l}$ of paths of length $l$ as basis.  

The $l$-th power of the arrow ideal can be written as 
\[
\rad_{Q}^{\,l} =\text{the ideal spanned by all paths of length at least }l= \bigoplus_{m \geq l} \K Q_{m}.
\]

An ideal \(I\subseteq \K Q\) is \emph{admissible} if there exists an integer
\(m\geq 2\) such that
\[
  \rad_Q^m\subseteq I\subseteq \rad_Q^2.
\]
In this case, the pair \((Q,I)\) is called a \emph{bound quiver}, and the quotient algebra \(\K Q/I\) is the
corresponding \emph{bound quiver algebra}.
\end{definition}

\begin{definition}
A bound quiver algebra \(\K Q/I\) is \emph{gentle} if the following conditions
hold.
\begin{enumerate}[label=(G\arabic*),leftmargin=2em]
  \item At each vertex of \(Q\), there are at most two incoming arrows and at
  most two outgoing arrows.
  \item For every arrow \(\alpha\in Q_1\), there is at most one arrow \(\beta\)
  such that \(\beta\alpha\notin I\), and at most one arrow \(\gamma\) such that
  \(\alpha\gamma\notin I\), whenever the compositions are defined.
  \item The ideal \(I\) is generated by paths of length two.
  \item For every arrow \(\alpha\in Q_1\), there is at most one arrow \(\beta\)
  such that \(\beta\alpha\in I\), and at most one arrow \(\gamma\) such that
  \(\alpha\gamma\in I\), whenever the compositions are defined.
\end{enumerate}
\end{definition}

\begin{definition} [Cluster quivers] A quiver \(Q\) is called a \emph{cluster quiver} if it contains neither loops nor $2$-cycles.

\end{definition}

In the rest of the article all quivers are finite cluster quivers unless otherwise stated. 

\begin{definition}[Quivers mutation]
We recall the quiver mutation rules, [8].  Let \(Q\) be such a quiver and let \(k\in Q_0\).  The mutation
\(\mu_k(Q)\) is obtained by the following steps.
\begin{enumerate}[leftmargin=2em]
  \item For every oriented path \(i\to k\to j\), add an arrow \(i\to j\).
  \item Reverse all arrows incident with \(k\).
  \item Cancel all oriented \(2\)-cycles.
\end{enumerate}
The resulting quiver is denoted by \(Q'=\mu_k(Q)\). 
\end{definition}
\section{Mutation of bound quivers and gentle algebras}

We start with notions that will be used  to define mutations admissible ideals. 
\[
  I_k=\{i\in Q_0\mid i\to k\text{ is an arrow of }Q\},
  \qquad  \text{and} \qquad 
  O_k=\{j\in Q_0\mid k\to j\text{ is an arrow of }Q\}.
\]
The vertices in \(I_k\) are the incoming neighbors of \(k\), and the vertices in
\(O_k\) are the outgoing neighbors of \(k\).

\subsection{Exploring the invariance of gentle-degree quivers under mutation}

\begin{definition}
A quiver \(Q\) is called \emph{degree-gentle} if it satisfies only the arrow-degree condition, (G1),  in the definition of a gentle algebra above.  Equivalently,
\[
  \deg_Q^-(v)\leq 2
  \qquad\text{and}\qquad
  \deg_Q^+(v)\leq 2
\]
for every vertex \(v\in Q_0\).
\end{definition}

Thus every gentle algebra has a degree-gentle underlying quiver, but a
degree-gentle quiver does not determine a gentle algebra until relations are
chosen. In rest of this section we show that the degree-gentle condition after mutation can be
checked purely locally.

\begin{proposition}[Local degree criterion]\label{prop:degree-criterion}
Let \(Q\) be a finite simple \(2\)-acyclic degree-gentle quiver, and let
\(k\in Q_0\).  For \(i\in I_k\), define
\[
  c_i=\#\{j\in O_k\mid j\to i\text{ is an arrow of }Q\},
\]
and for \(j\in O_k\), define
\[
  c^j=\#\{i\in I_k\mid j\to i\text{ is an arrow of }Q\}.
\]
Then \(\mu_k(Q)\) is degree-gentle if and only if the following inequalities
hold:
\[
  \deg_Q^+(i)-1+|O_k|-c_i\leq 2,
  \qquad
  \deg_Q^-(i)+1-c_i\leq 2
\]
for every \(i\in I_k\), and
\[
  \deg_Q^-(j)-1+|I_k|-c^j\leq 2,
  \qquad
  \deg_Q^+(j)+1-c^j\leq 2
\]
for every \(j\in O_k\).
\end{proposition}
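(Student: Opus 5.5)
Since \(\mu_k\) only alters arrows incident with \(k\) and arrows between \(I_k\) and \(O_k\), the plan is to compute the in- and out-degree of every vertex of \(\mu_k(Q)\) explicitly and compare with the bound \(2\). The hypotheses do the bookkeeping work: \(Q\) is simple (no multiple arrows), so \(i\to k\to j\) contributes exactly one new arrow \(i\to j\) for each pair \((i,j)\in I_k\times O_k\); and \(Q\) is \(2\)-acyclic, so \(I_k\cap O_k=\emptyset\) and there is never both \(i\to j\) and \(j\to i\) in \(Q\). I will do the count in three groups of vertices: \(k\) itself, vertices outside \(I_k\cup O_k\cup\{k\}\), and vertices of \(I_k\) and \(O_k\).

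First, \(k\): arrows incident with \(k\) are only reversed, and no new arrow created in step 1 touches \(k\). Step 3 cancels only \(2\)-cycles formed by a new arrow \(i\to j\) and an old arrow \(j\to i\), neither of which is incident with \(k\). Hence \(\deg^{\pm}_{\mu_k(Q)}(k)=\deg^{\mp}_Q(k)\le 2\), so \(k\) imposes no condition. A vertex \(v\notin I_k\cup O_k\cup\{k\}\) is untouched by all three steps, so its degrees are unchanged.

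Now take \(i\in I_k\). Its out-degree loses the arrow \(i\to k\), gains \(|O_k|\) new arrows \(i\to j\), and loses \(c_i\) more arrows in step 3. This count uses that there is no pre-existing \(i\to j\), since that would create a double arrow after step 1; I must check whether the paper intends to keep or merge such arrows. I will argue that such parallel arrows simply add to the degree and do not cancel. The formula as stated treats the count \(|O_k|\) as all new arrows, so I will read it with this convention and note that simplicity plus the mutation rule give exactly this count. In step 3 each old arrow \(j\to i\) with \(j\in O_k\) cancels against the new arrow \(i\to j\), so the out-degree drops by \(c_i\) and the in-degree also drops by \(c_i\). The in-degree of \(i\) gains the reversed arrow \(k\to i\). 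This gives \(\deg^+=\deg^+_Q(i)-1+|O_k|-c_i\) and \(\deg^-=\deg^-_Q(i)+1-c_i\). The computation for \(j\in O_k\) is symmetric, with \(c^j\) counting the arrows \(j\to i\) that cancel.

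With these formulas, \(\mu_k(Q)\) is degree-gentle if and only if every vertex has in- and out-degree at most \(2\), and that is exactly the list of inequalities in the statement. The main obstacle is the cancellation bookkeeping. I must make sure that each new arrow cancels against at most one old arrow, which holds because \(Q\) is simple, and that no cancellation involves an arrow incident with \(k\), which holds because there are no loops or \(2\)-cycles at \(k\). I will state these two facts explicitly before doing the count.
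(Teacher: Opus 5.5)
Your proposal is correct and follows essentially the same route as the paper. Both do a vertex-by-vertex degree count: at \(k\) the in- and out-degrees simply swap, vertices outside \(I_k\cup O_k\cup\{k\}\) are untouched, and at \(I_k\) and \(O_k\) the \(2\)-cycle cancellations are counted by \(c_i\) and \(c^j\). Your worry about a pre-existing arrow \(i\to j\) is unnecessary, because that arrow is already counted in \(\deg_Q^+(i)\) and the new parallel arrow is counted in \(|O_k|\), so the formula holds with multiplicities and needs no extra assumption.
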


\begin{proof}
Let \(Q'=\mu_k(Q)\).  Since \(Q\) is \(2\)-acyclic, the sets \(I_k\) and \(O_k\)
are disjoint.  Mutation at \(k\) changes only the arrows incident with \(k\) and
the arrows between vertices of \(I_k\) and vertices of \(O_k\).  Therefore every
vertex outside
\[
  I_k\cup O_k\cup\{k\}
\]
has the same incoming and outgoing degree in \(Q'\) as it had in \(Q\).

At the vertex \(k\), all incident arrows are reversed.  Hence
\[
  \deg_{Q'}^+(k)=\deg_Q^-(k),
  \qquad
  \deg_{Q'}^-(k)=\deg_Q^+(k).
\]
Since \(Q\) is degree-gentle, both numbers are at most two.  Thus \(k\) itself
causes no obstruction.

Let \(i\in I_k\).  The arrow \(i\to k\) is reversed, so \(i\) loses one outgoing
arrow and gains one incoming arrow.  For each \(j\in O_k\), the path
\(i\to k\to j\) creates a new arrow \(i\to j\).  If \(j\to i\) was already an
arrow of \(Q\), then the new arrow \(i\to j\) cancels with \(j\to i\).  The
number of such cancellations involving \(i\) is \(c_i\).  Therefore
\[
  \deg_{Q'}^+(i)=\deg_Q^+(i)-1+|O_k|-c_i,
\]
and
\[
  \deg_{Q'}^-(i)=\deg_Q^-(i)+1-c_i.
\]

Similarly, let \(j\in O_k\).  The arrow \(k\to j\) is reversed, so \(j\) loses
one incoming arrow and gains one outgoing arrow.  For each \(i\in I_k\), the
path \(i\to k\to j\) creates a new arrow \(i\to j\), unless it cancels with an
already existing arrow \(j\to i\).  The number of such cancellations involving
\(j\) is \(c^j\).  Hence
\[
  \deg_{Q'}^-(j)=\deg_Q^-(j)-1+|I_k|-c^j,
\]
and
\[
  \deg_{Q'}^+(j)=\deg_Q^+(j)+1-c^j.
\]

We have computed all degree changes.  Since all unaffected vertices remain
degree-gentle and \(k\) automatically remains degree-gentle, the mutated quiver
\(Q'\) is degree-gentle exactly when the displayed inequalities hold at the
vertices in \(I_k\cup O_k\).
\end{proof}

\begin{corollary}[Source and sink mutations]\label{cor:source-sink}
Let \(Q\) be a finite simple \(2\)-acyclic degree-gentle quiver.
\begin{enumerate}[label=(\arabic*),leftmargin=2em]
  \item If \(k\) is a source, then \(\mu_k(Q)\) is degree-gentle if and only if
  \[
    \deg_Q^+(j)\leq 1
  \]
  for every arrow \(k\to j\).
  \item If \(k\) is a sink, then \(\mu_k(Q)\) is degree-gentle if and only if
  \[
    \deg_Q^-(i)\leq 1
  \]
  for every arrow \(i\to k\).
\end{enumerate}
\end{corollary}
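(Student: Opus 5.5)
The plan is to specialize Proposition~\ref{prop:degree-criterion}. The key simplification is that when \(k\) is a source or a sink, one of the sets \(I_k\), \(O_k\) is empty. Every \(c_i\) and \(c^j\) is then a count over an empty set and equals zero, and the four families of inequalities collapse to a single family.

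\emph{Case (1): \(k\) is a source.} Then \(I_k=\emptyset\), so there are no conditions indexed by \(i\in I_k\). For \(j\in O_k\) we have \(c^j=0\) and \(|I_k|=0\), so the two remaining inequalities become
\[
\deg_Q^-(j)-1\le 2
\qquad\text{and}\qquad
\deg_Q^+(j)+1\le 2 .
\]
The first inequality always holds, because \(\deg_Q^-(j)\le 2\) by degree-gentleness of \(Q\). The second is equivalent to \(\deg_Q^+(j)\le 1\). Since \(Q\) is simple, the vertices \(j\in O_k\) are exactly the targets of the arrows \(k\to j\). So the criterion of Proposition~\ref{prop:degree-criterion} reads \(\deg_Q^+(j)\le 1\) for every arrow \(k\to j\), which is the claim.

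\emph{Case (2): \(k\) is a sink.} This is dual. Now \(O_k=\emptyset\), so there are no conditions indexed by \(j\in O_k\), and \(c_i=0\) for every \(i\in I_k\). The remaining inequalities are
\[
\deg_Q^+(i)-1\le 2
\qquad\text{and}\qquad
\deg_Q^-(i)+1\le 2 .
\]
The first is automatic, and the second is equivalent to \(\deg_Q^-(i)\le 1\). Alternatively, one could apply case (1) to the opposite quiver, since mutation commutes with taking the opposite quiver.

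There is no real obstacle here. The only point needing care is a degenerate one. If \(k\) is both a source and a sink, it is isolated, so both \(I_k\) and \(O_k\) are empty. Then both conditions are vacuous, and consistently \(\mu_k(Q)=Q\) is degree-gentle. The proposition also covers this case directly, since all its conditions are vacuous.
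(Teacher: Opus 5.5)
Your proof is correct and is essentially the paper's argument. The paper recomputes directly that each neighbor $j$ of a source loses one incoming arrow and gains one outgoing arrow, which is exactly your specialization of Proposition~\ref{prop:degree-criterion} with $I_k=\varnothing$ and $c^j=0$; the sink case is handled dually, and citing the proposition additionally covers the vertex $k$ and the vertices away from $k$, which the paper's direct computation leaves implicit.
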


\begin{proof}
If \(k\) is a source, then \(I_k=\varnothing\), so mutation creates no new
arrows through \(k\).  It only reverses the arrows \(k\to j\).  Each such
neighbor \(j\) loses one incoming arrow and gains one outgoing arrow, so
\[
  \deg_{\mu_k(Q)}^-(j)=\deg_Q^-(j)-1,
  \qquad
  \deg_{\mu_k(Q)}^+(j)=\deg_Q^+(j)+1.
\]
The incoming degree is automatically at most two, while the outgoing degree is
at most two exactly when \(\deg_Q^+(j)\leq 1\).  The sink case is dual.
\end{proof}

\begin{remark}
The condition that \(k\) is a source or a sink is not sufficient by itself.  A
neighbor of \(k\) can become over-saturated after the incident arrow is reversed.
The extra inequalities in Corollary \ref{cor:source-sink} are necessary.
\end{remark}

\subsection{Matchings relations and gentleness property}

We now turn from the quiver to the ideal of relations.  In this section, \(Q\)
is a degree-gentle quiver and \(I\subseteq \K Q\) is a monomial quadratic ideal.

\begin{definition}
The set of zero-relations of \(I\) is
\[
  Z(I)=\{(\alpha,\beta)\mid \alpha:i\to j,
  \ \beta:j\to \ell,\ \beta\alpha\in I\}.
\]
Thus \(Z(I)\) records exactly which length-two paths are killed.
\end{definition}

Let \(v\in Q_0\).  Write
\[
  \In_Q(v)=\{\alpha:u\to v\},
  \qquad
  \Out_Q(v)=\{\beta:v\to w\}.
\]
The length-two paths passing through \(v\) are the elements of
\[
  \In_Q(v)\times \Out_Q(v).
\]
For a chosen set of zero-relations \(Z\), define the \emph{local part} of \(v\) by
\[
  Z_v=Z\cap \bigl(\In_Q(v)\times \Out_Q(v)\bigr).
\]

\begin{definition}
A subset \(S\subseteq X\times Y\) is a \emph{matching} if no two elements of
\(S\) have the same first coordinate and no two elements of \(S\) have the same
second coordinate.
\end{definition}

\begin{proposition}[Local matching criterion]\label{prop:matching}
Let \(Q\) be degree-gentle and let \(I=\langle Z\rangle\) be generated by
length-two paths.  Then the local continuation conditions of gentleness hold for
\(\K Q/I\) if and only if, for every vertex \(v\in Q_0\), both
\[
  Z_v
  \qquad\text{and}\qquad
  \bigl(\In_Q(v)\times \Out_Q(v)\bigr)\setminus Z_v
\]
are matchings.
\end{proposition}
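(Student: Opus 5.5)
The plan is to reduce everything to a vertex-by-vertex statement. The ``local continuation conditions of gentleness'' are (G2) and (G4). Each of them speaks about length-two paths \(\beta\alpha\) with \(\alpha\in\In_Q(v)\) and \(\beta\in\Out_Q(v)\), where \(v=t(\alpha)=s(\beta)\).

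The first step uses that \(I=\langle Z\rangle\) is a monomial ideal generated by length-two paths. From this I will record that a length-two path \(\beta\alpha\) lies in \(I\) if and only if \((\alpha,\beta)\in Z\). One direction is by definition. For the other, \(I\) is spanned by the paths containing some generator as a subpath, and a length-two path contains only itself as a length-two subpath. Hence the conditions ``\(\beta\alpha\in I\)'' and ``\(\beta\alpha\notin I\)'' translate exactly into membership in \(Z\) or in its complement. Since \(\alpha\) determines the vertex \(v=t(\alpha)\), every pair concerned lies in \(\In_Q(v)\times\Out_Q(v)\). So only \(Z_v\) and \(\bigl(\In_Q(v)\times\Out_Q(v)\bigr)\setminus Z_v\) are involved.

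Next, for a fixed arrow \(\alpha\) ending at \(v\), I will rewrite the two halves of (G4).
\begin{itemize}
\item ``At most one \(\beta\) with \(\beta\alpha\in I\)'' says that at most one element of \(Z_v\) has first coordinate \(\alpha\).
\item ``At most one \(\gamma\) with \(\alpha\gamma\in I\)'' concerns the vertex \(s(\alpha)\). It says that at most one element of \(Z_{s(\alpha)}\) has second coordinate \(\alpha\).
\end{itemize}
Letting \(\alpha\) range over all arrows, these cover every arrow appearing as a first coordinate at its target and as a second coordinate at its source. So (G4) for all arrows is equivalent to \(Z_v\) being a matching for every \(v\). The identical argument, with the complement in place of \(Z_v\), shows that (G2) is equivalent to the complement being a matching at every vertex.

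Combining the two equivalences gives the proposition. The main point of care is the bookkeeping of the second halves of (G2) and (G4): the statement about \(\alpha\gamma\) must be placed at the vertex \(s(\alpha)\), not \(t(\alpha)\). This reindexing is a bijection between (arrow, side) pairs and (vertex, coordinate) pairs, and that bijection is why the global conditions split cleanly into local ones.

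The degree-gentle hypothesis is not needed for the equivalence itself. It only ensures the local sets are small, so that the conditions are nontrivial.
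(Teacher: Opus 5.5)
Your proposal is correct and follows the same route as the paper: you unpack (G4) and (G2) vertex by vertex as the statements that \(Z_v\) and its complement in \(\In_Q(v)\times\Out_Q(v)\) have no repeated first or second coordinate. Your extra steps are sound and fill in points the paper's proof leaves implicit: you justify that a length-two path lies in \(I\) exactly when it lies in \(Z\), you place the left-continuation condition at \(s(\alpha)\), and you observe that degree-gentleness is not actually needed for the equivalence.
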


\begin{proof}
The set \(Z_v\) records the forbidden length-two paths through \(v\).  It is a
matching precisely when each incoming arrow has at most one forbidden right
continuation and each outgoing arrow has at most one forbidden left
continuation.  This is exactly condition (G4) at \(v\).

The complement records the permitted length-two paths through \(v\).  It is a
matching precisely when each incoming arrow has at most one permitted right
continuation and each outgoing arrow has at most one permitted left
continuation.  This is exactly condition (G2) at \(v\).  Since \(Q\) is already
degree-gentle and \(I\) is generated by length-two paths, these matching
conditions are equivalent to the local gentle conditions.
\end{proof}

Because \(|\In_Q(v)|\leq 2\) and \(|\Out_Q(v)|\leq 2\), Proposition
\ref{prop:matching} gives the following table.
\[
\begin{array}{c|l}
(|\In_Q(v)|,|\Out_Q(v)|) & \text{Allowed choices for }Z_v \\
\hline
(0,b)\text{ or }(a,0) & \varnothing \\
(1,1) & \text{kill the unique path, or allow it} \\
(1,2) & \text{kill exactly one of the two paths} \\
(2,1) & \text{kill exactly one of the two paths} \\
(2,2) & \text{kill a diagonal or anti-diagonal pair of paths.}
\end{array}
\]
In particular, if
\[
  \In_Q(v)=\{\alpha_1,\alpha_2\},
  \qquad
  \Out_Q(v)=\{\beta_1,\beta_2\},
\]
then in the \((2,2)\)-case the allowed choices are
\[
  Z_v=\{(\alpha_1,\beta_1),(\alpha_2,\beta_2)\}
\]
or
\[
  Z_v=\{(\alpha_1,\beta_2),(\alpha_2,\beta_1)\}.
\]

\subsection{Admissibility and the permitted transition graph}

The matching criterion is local, but admissibility is global. We now introduce a graph associated to each pair $(Q,I)$, a quiver $Q$ and monomial quadratic ideal \(I\)  which we use to identify a simple global admissibility test.

\begin{definition}
Let \(Q\) be a finite quiver and let \(Z\) be a set of zero length-two paths.
The \emph{permitted transition graph} \(T(Q,Z)\) is the directed graph defined
as follows:
\begin{itemize}[leftmargin=2em]
  \item the vertices of \(T(Q,Z)\) are the arrows of \(Q\);
  \item there is an arrow \(\alpha\to \beta\) in \(T(Q,Z)\) if
  \(\beta\alpha\) is a composable length-two path in \(Q\) and
  \((\alpha,\beta)\notin Z\).
\end{itemize}
\end{definition}

\begin{proposition}[Admissibility test]\label{prop:admissibility}
Let \(Q\) be a finite quiver and let \(I=\langle Z\rangle\) be a monomial
quadratic ideal.  Then \(I\) is admissible if and only if \(T(Q,Z)\) has no
oriented cycle.
\end{proposition}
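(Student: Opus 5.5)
Since $I=\langle Z\rangle$ is generated by length-two paths, we automatically have $I\subseteq \rad_Q^2$. Admissibility therefore reduces to whether $\rad_Q^m\subseteq I$ for some $m\geq 2$. The plan is to translate this into a statement about paths in $Q$ that avoid $Z$, and then into a statement about walks in $T(Q,Z)$.

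\emph{Step 1 (monomial description of $I$).} I will show that a path $p=\alpha_n\cdots\alpha_1$ of length $n\geq 2$ lies in $I$ if and only if some consecutive pair satisfies $(\alpha_r,\alpha_{r+1})\in Z$. One direction is immediate, since $p$ is then a multiple of a generator. For the converse, $I$ is spanned as a vector space by the paths $u\,(\beta\alpha)\,w$ with $(\alpha,\beta)\in Z$. Distinct paths are linearly independent in $\K Q$, so a single path lies in the span of such paths only if it equals one of them. Hence $I$ has as basis exactly the paths containing a forbidden consecutive pair.

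\emph{Step 2 (dictionary with $T(Q,Z)$).} By definition of $T(Q,Z)$, a path $\alpha_n\cdots\alpha_1\notin I$ of length $n\geq1$ is the same thing as a walk $\alpha_1\to\alpha_2\to\cdots\to\alpha_n$ in $T(Q,Z)$ visiting $n$ vertices. Combined with Step 1, $\rad_Q^m\subseteq I$ holds if and only if $T(Q,Z)$ has no walk with $m$ vertices. For the inclusion of the ideal, it suffices to check that all paths of length exactly $m$ lie in $I$, since longer paths contain such a subpath.

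\emph{Step 3 (finiteness of walks versus acyclicity).} If $T(Q,Z)$ has an oriented cycle, we can traverse it repeatedly. This gives walks of every length, hence paths of every length not in $I$, so no $m$ works. Conversely, if $T(Q,Z)$ is acyclic, every walk visits distinct vertices. Walks then have at most $|Q_1|$ vertices, so $m=\max(2,|Q_1|+1)$ gives $\rad_Q^m\subseteq I$.

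The only step needing care is Step 1, the claim that membership of a path in a monomial ideal is detected combinatorially. This rests on the basis of paths and the fact that $I$ is spanned by paths. Loops and cycles of $Q$ need no special treatment, since they are handled uniformly through $T(Q,Z)$.
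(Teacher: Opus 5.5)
Your proof is correct and follows the same route as the paper. A permitted cycle in $T(Q,Z)$ gives arbitrarily long paths outside $I$, and acyclicity bounds the length of surviving paths, which gives $\rad_Q^m\subseteq I$ (the paper's proof of Proposition~\ref{prop:admissibility-first-guarantee} uses your bound $|Q_1|+1$). Your Step~1 adds something the paper leaves implicit: it proves that a path lies in $I$ exactly when it contains a consecutive pair from $Z$, which the paper uses tacitly when it identifies directed paths in $T(Q,Z)$ with paths that survive modulo $I$.
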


\begin{proof}
If \(T(Q,Z)\) has an oriented cycle, then one can follow this cycle repeatedly.
This produces arbitrarily long paths in \(Q\) whose consecutive length-two
subpaths are all permitted.  Hence no power \(\rad_Q^m\) can be contained in
\(I\), and \(I\) is not admissible.

Conversely, suppose \(T(Q,Z)\) has no oriented cycle.  Since \(Q\) is finite,
\(T(Q,Z)\) is finite.  Therefore there is a maximum length of a directed path in
\(T(Q,Z)\).  Equivalently, there is a maximum length of a path in \(Q\) that
survives modulo \(I\).  Hence all sufficiently long paths in \(Q\) lie in \(I\),
so \(\rad_Q^m\subseteq I\) for some \(m\geq 2\).  Since \(I\) is generated by
length-two paths, \(I\subseteq \rad_Q^2\).  Thus \(I\) is admissible.
\end{proof}

\begin{corollary}\label{cor:gentlecriterion}
Let \(Q\) be degree-gentle and let \(I=\langle Z\rangle\) be a monomial
quadratic ideal.  Then \(\K Q/I\) is gentle if and only if
\begin{enumerate}[label=(\roman*),leftmargin=2em]
  \item at every vertex \(v\), both \(Z_v\) and its complement in
  \(\In_Q(v)\times \Out_Q(v)\) are matchings; and
  \item the permitted transition graph \(T(Q,Z)\) is acyclic.
\end{enumerate}
\end{corollary}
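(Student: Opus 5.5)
The corollary follows by combining Proposition~\ref{prop:matching} with Proposition~\ref{prop:admissibility}, and matching the four conditions (G1)--(G4), together with admissibility, against hypotheses (i) and (ii). First I will note that gentleness of \(\K Q/I\) means two things. The pair \((Q,I)\) must be a bound quiver, so \(I\) is admissible. And (G1)--(G4) must hold. Since \(Q\) is assumed degree-gentle, (G1) holds automatically. Since \(I=\langle Z\rangle\) is generated by length-two paths, (G3) holds automatically. So gentleness reduces to the conjunction of three things: (G2), (G4), and admissibility of \(I\).

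Next I will invoke Proposition~\ref{prop:matching}. Under exactly these hypotheses (degree-gentle \(Q\), \(I\) generated by length-two paths), (G2) and (G4) together are equivalent to condition (i). Then I will invoke Proposition~\ref{prop:admissibility}, which says that \(I\) is admissible if and only if \(T(Q,Z)\) is acyclic, i.e.\ condition (ii). Putting these equivalences together gives both directions at once.

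The only point needing care is a bookkeeping issue between the sets \(Z\) and \(I\). Membership \(\beta\alpha\in I\) for a composable length-two path must coincide with \((\alpha,\beta)\in Z\), since (G2) and (G4) are phrased via \(I\) while (i) is phrased via \(Z\). I expect this to be the main (though mild) obstacle. To handle it, I will observe that \(I\) is spanned by the paths containing some element of \(Z\) as a subpath. A length-two path contains only itself as a length-two subpath, so it lies in \(I\) exactly when it belongs to \(Z\). Here I am implicitly taking \(Z\) to consist of paths of length exactly two, i.e.\ \(Z=Z(I)\). With this identification, \(Z_v\) is precisely the set of killed paths through \(v\), and the argument closes.
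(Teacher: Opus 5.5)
Your proposal is correct and is the argument the paper intends: the corollary is stated without proof as the direct combination of Proposition~\ref{prop:matching}, which gives (G2) and (G4) \(\Leftrightarrow\) (i), and Proposition~\ref{prop:admissibility}, which gives admissibility \(\Leftrightarrow\) (ii), with (G1) and (G3) automatic from the hypotheses. Your observation that a length-two path lies in \(I\) exactly when it belongs to \(Z\) spells out a point the paper leaves implicit, and it is correct.
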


\begin{example}[A non-admissible ideal and a cyclic permitted transition graph]
Let \(Q\) be the oriented \(3\)-cycle

\begin{equation}\label{}
\nonumber \xymatrix{
\cdot_{3} \ar[d]_{\gamma} & \ar[l]_{\beta}\cdot_{2} \\
	\cdot_{1}  \ar[ur]_{\alpha}}.
  \end{equation}

Take
\[
Z=\varnothing,
\qquad
I=\langle Z\rangle=0.
\]
Then every length-two path is permitted. Hence the permitted transition graph
\(T(Q,Z)\) has vertices
\[
\alpha,\ \beta,\ \gamma,
\]
and arrows

\begin{equation}\label{}
\nonumber \xymatrix{
\cdot_{\gamma} \ar[d] & \ar[l]\cdot_{\beta} \\
	\cdot_{\alpha}  \ar[ur]}.
  \end{equation}

Thus \(T(Q,Z)\) is itself an oriented \(3\)-cycle. Therefore \(T(Q,Z)\) has an oriented cycle.

On the algebra side, the ideal \(I=0\) is not admissible. Indeed, for every
\(m\geq 2\), there are nonzero paths of length \(m\) obtained by going around
the cycle repeatedly. Hence
\[
\rad_Q^m \nsubseteq I=0
\]
for every \(m\geq 2\). Therefore there is no integer \(m\geq 2\) such that
\[
\rad_Q^m\subseteq I\subseteq \rad_Q^2.
\]
Thus \(I\) is not admissible, in agreement with Proposition~\ref{prop:admissibility}.
\end{example}

\begin{example}[An admissible ideal obtained by breaking the permitted cycle]
Let \(Q\) again be the oriented \(3\)-cycle

\begin{equation}\label{}
\nonumber \xymatrix{
\cdot_{3} \ar[d]_{\gamma} & \ar[l]_{\beta}\cdot_{2} \\
	\cdot_{1}  \ar[ur]_{\alpha}}.
  \end{equation}
  
This time take
\[
Z=\{(\alpha,\beta)\},
\qquad
I=\langle \beta\alpha\rangle.
\]
Thus the length-two path
\[
1\xrightarrow{\alpha}2\xrightarrow{\beta}3
\]
is zero, while the other two length-two paths are permitted:
\[
\gamma\beta\notin I,
\qquad
\alpha\gamma\notin I.
\]

The permitted transition graph \(T(Q,Z)\) has vertices
\[
\alpha,\ \beta,\ \gamma.
\]
Since \((\alpha,\beta)\in Z\), there is no arrow
\[
\alpha\longrightarrow \beta
\]
in \(T(Q,Z)\). However, the other two transitions are permitted, so we have
\[
\beta\longrightarrow \gamma,
\qquad
\gamma\longrightarrow \alpha.
\]
Therefore
\[
T(Q,Z):
\qquad
\beta\longrightarrow \gamma\longrightarrow \alpha
\]
has no oriented cycle.

We now check admissibility directly. Clearly,
\[
I=\langle \beta\alpha\rangle\subseteq \rad_Q^2.
\]
Moreover, every path of length \(4\) in the oriented \(3\)-cycle contains the
subpath \(\beta\alpha\). Indeed, any path of length \(4\) winds around the
cycle far enough to include the consecutive pair
\[
\alpha,\beta,
\]
and hence contains the zero relation \(\beta\alpha\). Therefore
\[
\rad_Q^4\subseteq I.
\]
Thus
\[
\rad_Q^4\subseteq I\subseteq \rad_Q^2,
\]
so \(I\) is admissible. This agrees with Proposition~\ref{prop:admissibility},
because \(T(Q,Z)\) is acyclic.
\end{example}

\begin{example}[A nonzero non-admissible ideal]
Let \(Q\) be the quiver

\begin{equation}\label{}
\nonumber \xymatrix{
\cdot_{4}&\cdot_{3} \ar[d]_{\gamma} \ar[l]_{\delta}& \ar[l]_{\beta}\cdot_{2} \\
	&\cdot_{1}  \ar[ur]_{\alpha}}.
  \end{equation}

Let
\[
Z=\{(\beta,\delta)\},
\qquad
I=\langle \delta\beta\rangle.
\]
Thus the path
\[
2\xrightarrow{\beta}3\xrightarrow{\delta}4
\]
is zero, but the transitions around the oriented \(3\)-cycle remain permitted, therefore \(T(Q,Z)\) contains the oriented cycle

\begin{equation}\label{}
\nonumber \xymatrix{
\cdot_{\gamma} \ar[d] & \ar[l]\cdot_{\beta} \\
	\cdot_{\alpha}  \ar[ur]}.
  \end{equation}

Consequently, \(I\) is not admissible. Indeed, for every \(m\geq 2\), there
are nonzero paths of length \(m\) obtained by repeatedly traveling around the
cycle

\begin{equation}\label{}
\nonumber \xymatrix{
\cdot_{3} \ar[d] & \ar[l]\cdot_{2} \\
	\cdot_{1}  \ar[ur]}.
  \end{equation}

These paths never use the zero relation \(\delta\beta\). Hence
\[
\rad_Q^m\nsubseteq I
\]
for every \(m\geq 2\), and so no integer \(m\) satisfies
\[
\rad_Q^m\subseteq I\subseteq \rad_Q^2.
\]
Thus \(I\) is not admissible, again in agreement with
Proposition~\ref{prop:admissibility}.
\end{example}

\section{A mutation extension for relations of \((Q, I)\).}

Let \((Q,I)\) be the initial gentle bound quiver, let \(k\in Q_0\), and suppose that
\[
  Q'=\mu_k(Q)
\]
is degree-gentle.  We describe a procedure for choosing a candidate ideal
\(I'\subseteq \K Q'\). A mutation-based operation to be applied on a bound quiver algebra \((Q, I)\) to produce a new one.

Write
\[
  I_k=\{i\in Q_0\mid i\to k\},
  \qquad
  O_k=\{j\in Q_0\mid k\to j\}.
\]
For \(i\in I_k\), write
\[
  \alpha_i:i\to k,
\]
and for \(j\in O_k\), write
\[
  \beta_j:k\to j.
\]
After mutation, these arrows become
\[
  \alpha_i^*:k\to i,
  \qquad
  \beta_j^*:j\to k.
\]
For every initial path
\[
  i\xrightarrow{\alpha_i} k \xrightarrow{\beta_j} j,
\]
mutation creates a composite arrow
\[
  \eta_{ij}:i\to j,
\]
unless this arrow is canceled with an opposite arrow during the \(2\)-cycle
cancellation step.

\subsection{Mutation of Admissible Ideals}
\label{sec:admissibility-first-relation-mutation}

In this subsection we introduce  Construction ~\ref{constr:admissibilityfirst}. Our purpose is extend the action of mutation to also cover the permissible ideals. So we start with a pair of a quiver and an admissible ideal \((Q,I)\) and produce another pair \((\mu_{k}(Q),I')\) where \(I'\) is an admissible ideal of \(K(\mu_{k}(Q))\).

  The key idea is to use the admissibility test, Proposition ~\ref{prop:admissibility},  by constructing the permitted transitions
first and to choose them to be acyclic.  The zero-relations are then defined
as the complement of the chosen permitted transitions.  Since admissibility of
quadratic monomial ideals is equivalent to acyclicity of the permitted
transition graph, this procedure produces an admissible ideal by construction.

Throughout this section, let \((Q,I)\) be a gentle bound quiver, let
\[
  Q'=\mu_k(Q),
\]
and assume that
\[
  I=\langle Z\rangle
\]
is a quadratic monomial ideal.  Let
\[
  \mathcal C(Q')
  =
  \{(\alpha,\beta)\in Q'_1\times Q'_1
    \mid t(\alpha)=s(\beta)\}
\]
denote the set of composable pairs of arrows in \(Q'\).  Thus a pair
\((\alpha,\beta)\in \mathcal C(Q')\) corresponds to the length-two path
\(\beta\alpha\).

\begin{construction}[Admissibility mutation]
\label{constr:admissibilityfirst}
We construct a quadratic monomial ideal
\[
  I'=\langle Z'\rangle\subseteq \K Q'
\]
as follows.

\begin{enumerate}[label=\textbf{Step \arabic*.},leftmargin=3em]
  \item \textbf{Record the mutation data.}
  For every oriented path
  \[
    i\xrightarrow{\alpha_i} k\xrightarrow{\beta_j} j
  \]
  in \(Q\), let
  \[
    \eta_{ij}:i\to j
  \]
  denote the corresponding composite arrow in \(Q'\), provided that this arrow
  survives the two-cycle cancellation step.  Also write
  \[
    \alpha_i^*:k\to i,
    \qquad
    \beta_j^*:j\to k
  \]
  for the arrows obtained by reversing \(\alpha_i\) and \(\beta_j\), respectively.

  \item \textbf{Define the forced zero-relations.}
  Let
  \[
    Z_{\mathrm{for}}\subseteq \mathcal C(Q')
  \]
  be the set of length-two paths that are forced to be zero after mutation.
  We define \(Z_{\mathrm{for}}\) by the following rules.

  \begin{enumerate}[label=(\alph*),leftmargin=2.5em]
    \item If a length-two path in \(Q'\) is made of initial arrows that survive the
    mutation and is not affected by the mutation at \(k\), then preserve its
    initial zero-relation status.  Thus, if
    \[
      \beta\alpha\in I,
    \]
    then put
    \[
      (\alpha,\beta)\in Z_{\mathrm{for}}.
    \]

    \item If an initial arrow is removed during the two-cycle cancellation step,
    then delete every initial zero-relation involving that arrow.  Such a relation
    cannot be transported directly to \(Q'\).

    \item Suppose that \(\eta_{ij}:i\to j\) is a new composite arrow in \(Q'\).
    If \(\rho:h\to i\) is an initial arrow that survives in \(Q'\), then put
    \[
      (\rho,\eta_{ij})\in Z_{\mathrm{for}}
      \quad\text{whenever}\quad
      \alpha_i\rho\in I.
    \]
    Equivalently,
    \[
      \eta_{ij}\rho\in I'
      \quad\text{whenever}\quad
      \alpha_i\rho\in I.
    \]

    Similarly, if \(\tau:j\to \ell\) is an initial arrow that survives in \(Q'\),
    then put
    \[
      (\eta_{ij},\tau)\in Z_{\mathrm{for}}
      \quad\text{whenever}\quad
      \tau\beta_j\in I.
    \]
    Equivalently,
    \[
      \tau\eta_{ij}\in I'
      \quad\text{whenever}\quad
      \tau\beta_j\in I.
    \]

    \item Through the mutated vertex \(k\), use the complement-transpose rule.
    Thus, for every initial path
    \[
      i\xrightarrow{\alpha_i}k\xrightarrow{\beta_j}j,
    \]
    put
    \[
      (\beta_j^*,\alpha_i^*)\in Z_{\mathrm{for}}
      \quad\text{whenever}\quad
      \beta_j\alpha_i\notin I.
    \]
    Equivalently,
    \[
      \alpha_i^*\beta_j^*\in I'
      \quad\text{whenever}\quad
      \beta_j\alpha_i\notin I.
    \]
    Hence an initial permitted path through \(k\) becomes a zero path through
    \(k\) after mutation, while an initial zero path through \(k\) becomes
    preferred permitted.
  \end{enumerate}

  \item \textbf{Define preferred permitted transitions.}
  Let
  \[
    P_{\mathrm{pref}}\subseteq \mathcal C(Q')\setminus Z_{\mathrm{for}}
  \]
  be the set of composable pairs that we would like to keep permitted.  This
  set contains, for example,
  \begin{enumerate}[label=(\alph*),leftmargin=2.5em]
    \item initial permitted length-two paths away from the mutation neighborhood;
    \item transported permitted paths involving new composite arrows;
    \item reversed paths through \(k\) coming from initial zero-relations through
    \(k\).
  \end{enumerate}
  We impose the convention that zero-relations dominate: if a pair belongs to
  \(Z_{\mathrm{for}}\), then it is not allowed to belong to \(P_{\mathrm{pref}}\).

  \item \textbf{Choose an acyclic permitted skeleton.}
  Choose a subset
  \[
    P\subseteq P_{\mathrm{pref}}
  \]
  such that the directed graph with vertex set \(Q'_1\) and arrows
  \[
    \alpha\longrightarrow \beta
    \qquad\text{for }(\alpha,\beta)\in P
  \]
  has no oriented cycle.  There are two systematic ways to make such a choice.

  \begin{enumerate}[label=(\alph*),leftmargin=2.5em]
    \item If the graph determined by \(P_{\mathrm{pref}}\) is already acyclic,
    set
    \[
      P=P_{\mathrm{pref}}.
    \]

    \item Otherwise, choose a total order \(\prec\) on the set of arrows
    \(Q'_1\), and set
    \[
      P=
      \{(\alpha,\beta)\in P_{\mathrm{pref}}
        \mid \alpha\prec \beta\}.
    \]
    Then every permitted transition strictly increases with respect to
    \(\prec\), and therefore \(P\) is automatically acyclic. Equivalently, one may choose a feedback edge set \(B\) meeting every
    oriented cycle in the directed graph determined by \(P_{\mathrm{pref}}\),
    and set
    \[
      P=P_{\mathrm{pref}}\setminus B.
    \]
  \end{enumerate}

  \item \textbf{Define the new zero-relation set.}
  Set
  \[
    Z'=\mathcal C(Q')\setminus P.
  \]
  Equivalently, a length-two path \(\beta\alpha\) is zero in \(I'\) if and only
  if
  \[
    (\alpha,\beta)\notin P.
  \]
  Finally, define
  \[
    I'
    =
    \langle \beta\alpha\mid (\alpha,\beta)\in Z'\rangle
    \subseteq \K Q'.
  \]
\end{enumerate}
\end{construction}

\begin{proposition}[Admissibility guarantee]
\label{prop:admissibility-first-guarantee}
Let \(Q'=\mu_k(Q)\), and let
\[
  I'=\langle Z'\rangle\subseteq \K Q'
\]
be constructed as in Construction~\ref{constr:admissibilityfirst}.  Then
\(I'\) is admissible.
\end{proposition}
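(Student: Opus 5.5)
The plan is to reduce the statement directly to the admissibility test, Proposition~\ref{prop:admissibility}, applied to the pair \((Q',Z')\). Two things need to be checked. First, \(I'\) is a quadratic monomial ideal generated by the set \(Z'\) of composable length-two paths. Second, the permitted transition graph \(T(Q',Z')\) has no oriented cycle. The first is immediate from Step~5 of Construction~\ref{constr:admissibilityfirst}: \(I'\) is by definition generated by the monomials \(\beta\alpha\) with \((\alpha,\beta)\in Z'\subseteq\mathcal C(Q')\), so each generator is a path of length two. In particular \(I'\subseteq\rad_{Q'}^2\).

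The main step is to identify \(T(Q',Z')\) with the graph built in Step~4. By definition, \(T(Q',Z')\) has vertex set \(Q'_1\), and it has an arrow \(\alpha\to\beta\) exactly when \(\beta\alpha\) is composable, i.e.\ \((\alpha,\beta)\in\mathcal C(Q')\), and \((\alpha,\beta)\notin Z'\). Since \(Z'=\mathcal C(Q')\setminus P\) and \(P\subseteq P_{\mathrm{pref}}\subseteq\mathcal C(Q')\), the arrows of \(T(Q',Z')\) are precisely the pairs in \(P\). So \(T(Q',Z')\) coincides with the directed graph on \(Q'_1\) determined by \(P\).

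It then remains to confirm that this graph is acyclic in each branch of Step~4.
\begin{itemize}
\item In case (a), acyclicity is the hypothesis.
\item In case (b) with a total order \(\prec\), suppose \(\gamma_1\to\cdots\to\gamma_r\to\gamma_1\) were an oriented cycle. Then \(\gamma_1\prec\gamma_2\prec\cdots\prec\gamma_r\prec\gamma_1\), which contradicts irreflexivity of a strict total order.
\item In the feedback-edge-set variant, \(B\) meets every oriented cycle of the \(P_{\mathrm{pref}}\)-graph. Any cycle in the \(P\)-graph is also a cycle of the \(P_{\mathrm{pref}}\)-graph, since \(P\subseteq P_{\mathrm{pref}}\), and it avoids \(B\). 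This contradicts the defining property of \(B\).
\end{itemize}

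With acyclicity in hand, Proposition~\ref{prop:admissibility} gives \(\rad_{Q'}^m\subseteq I'\subseteq\rad_{Q'}^2\) for some \(m\geq2\). Explicitly, \(m\) can be taken to be one more than the number of vertices of \(T(Q',Z')\), i.e.\ \(|Q'_1|+1\), because a nonzero path of length \(m\) would give a walk of \(m-1\) transitions in an acyclic graph on \(|Q'_1|\) vertices. Therefore \(I'\) is admissible.

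I expect the only delicate point to be bookkeeping rather than mathematics. One must make sure that Step~5 really takes the complement inside \(\mathcal C(Q')\), so that non-composable pairs play no role. One must also make sure that the forced relations \(Z_{\mathrm{for}}\) and the rules of Step~2 are irrelevant to the conclusion: they only shrink \(P_{\mathrm{pref}}\), and admissibility depends solely on the acyclicity of \(P\). Finally, a quiver \(Q'\) with no composable pairs, or with \(P=\varnothing\), is covered as a degenerate case: then \(I'=\rad_{Q'}^2\), which is admissible with \(m=2\).
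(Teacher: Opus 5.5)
Your proposal is correct and follows essentially the same route as the paper. Both identify \(T(Q',Z')\) with the directed graph on \(Q'_1\) determined by \(P\), use the acyclicity from Step~4, and conclude \(\rad_{Q'}^{|Q'_1|+1}\subseteq I'\subseteq\rad_{Q'}^2\); the only difference is that you check acyclicity in each branch of Step~4 and cite Proposition~\ref{prop:admissibility}, whereas the paper takes acyclicity as given and repeats the path-length argument inline.
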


\begin{proof}
By construction, \(I'\) is generated by length-two paths. Hence
\[
  I'\subseteq \rad_{Q'}^2.
\]
It remains to show that \(\rad_{Q'}^m\subseteq I'\) for some \(m\geq 2\).

The permitted transition graph \(T(Q',Z')\) has vertex set \(Q'_1\), and its
arrows are precisely the transitions
\[
  \alpha\longrightarrow \beta
  \qquad\text{with}\qquad
  (\alpha,\beta)\in P.
\]
By Step 4 of Construction~\ref{constr:admissibilityfirst}, this directed graph
is acyclic.

Since \(Q'_1\) is finite, an acyclic directed graph on \(Q'_1\) has no directed
path of length greater than or equal to \(|Q'_1|\). Therefore every sufficiently
long path in \(Q'\) contains a consecutive length-two subpath which is not
permitted, hence belongs to \(Z'\). Thus, for example,
\[
  \rad_{Q'}^{|Q'_1|+1}\subseteq I'.
\]
Consequently,
\[
  \rad_{Q'}^{|Q'_1|+1}\subseteq I'\subseteq \rad_{Q'}^2,
\]
and so \(I'\) is admissible.
\end{proof}

The construction above guarantees admissibility, but not necessarily gentleness.
To obtain a gentle algebra, the permitted transitions must also satisfy the
local matching condition.  The following definition isolates the extra
condition needed for gentleness.

\begin{definition}[Acyclic local matching]
\label{def:acyclic-local-matching}
Assume that \(Q'\) is degree-gentle.  A subset
\[
  P\subseteq \mathcal C(Q')
\]
is called an \emph{acyclic local matching} if the following conditions hold.

\begin{enumerate}[label=(\alph*),leftmargin=2.5em]
  \item The directed graph with vertex set \(Q'_1\) and arrows
  \[
    \alpha\to \beta
    \qquad\text{for }(\alpha,
    \beta)\in P
  \]
  has no oriented cycle.

  \item For each vertex \(v\in Q'_0\), the set
  \[
    P_v
    =
    P\cap
    \bigl(\operatorname{In}_{Q'}(v)\times
          \operatorname{Out}_{Q'}(v)\bigr)
  \]
  satisfies the following local matching rules:
  \[
  \begin{array}{c|c}
  \bigl(\#\operatorname{In}_{Q'}(v),
        \#\operatorname{Out}_{Q'}(v)\bigr)
  & \text{condition on }P_v \\ \hline
  (0,b)\text{ or }(a,0) & P_v=\varnothing \\
  (1,1) & P_v \text{ has size }0\text{ or }1 \\
  (1,2) & P_v \text{ has size }1 \\
  (2,1) & P_v \text{ has size }1 \\
  (2,2) & P_v \text{ is a perfect matching.}
  \end{array}
  \]
\end{enumerate}
\end{definition}

\begin{construction}[Gentle-admissible completion]
\label{constr:gentle-admissible-completion}
Keep Steps 1--3 of Construction~\ref{constr:admissibilityfirst}.  Instead of
choosing an arbitrary acyclic subset \(P\subseteq P_{\mathrm{pref}}\), choose an
acyclic local matching
\[
  P\subseteq \mathcal C(Q')\setminus Z_{\mathrm{for}}
\]
which agrees with \(P_{\mathrm{pref}}\) as much as possible.  Then set
\[
  Z'=\mathcal C(Q')\setminus P
\]
and
\[
  I'
  =
  \langle \beta\alpha\mid (\alpha,\beta)\in Z'\rangle
  \subseteq \K Q'.
\]
\end{construction}

\begin{proposition}[Gentle-admissible completion]
\label{prop:gentle-admissible-completion}
Assume that \(Q'=\mu_k(Q)\) is degree-gentle.  If
Construction~\ref{constr:gentle-admissible-completion} can be completed, then
\[
  \K Q'/I'
\]
is a gentle algebra.
\end{proposition}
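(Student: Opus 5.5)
The plan is to reduce the statement to Corollary~\ref{cor:gentlecriterion}. That corollary applies to a degree-gentle quiver together with a monomial quadratic ideal, and we assume \(Q'=\mu_k(Q)\) is degree-gentle. By construction,
\[
  I'=\langle \beta\alpha\mid (\alpha,\beta)\in Z'\rangle
\]
is generated by length-two paths, so (G3) holds and \(I'\subseteq\rad_{Q'}^2\). It therefore suffices to verify the two conditions of the corollary for \(Z'=\mathcal C(Q')\setminus P\): the local matching condition at every vertex, and acyclicity of \(T(Q',Z')\).

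First, I will identify \(T(Q',Z')\) with the graph determined by \(P\). A composable pair \((\alpha,\beta)\) gives an arrow \(\alpha\to\beta\) of \(T(Q',Z')\) exactly when \((\alpha,\beta)\notin Z'\). Since \(P\subseteq\mathcal C(Q')\) and \(Z'\) is its complement in \(\mathcal C(Q')\), this happens precisely when \((\alpha,\beta)\in P\). Condition (a) of Definition~\ref{def:acyclic-local-matching} says this graph has no oriented cycle. Proposition~\ref{prop:admissibility} then gives admissibility of \(I'\), which is condition (ii) of the corollary; alternatively one can argue as in Proposition~\ref{prop:admissibility-first-guarantee}.

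Second, I will fix a vertex \(v\in Q'_0\) and check condition (i). Let \(X=\In_{Q'}(v)\times\Out_{Q'}(v)\). The sets \(Z'_v=X\setminus P_v\) and \(P_v\) are complementary in \(X\), so I need both \(P_v\) and \(X\setminus P_v\) to be matchings. Degree-gentleness gives \(|\In_{Q'}(v)|,|\Out_{Q'}(v)|\le 2\), so the check runs through the table in Definition~\ref{def:acyclic-local-matching}.
\begin{itemize}
  \item In the cases \((0,b)\) and \((a,0)\), \(X\) is empty and there is nothing to check.
  \item In the case \((1,1)\), every subset of a one-element set is a matching.
  \item In the cases \((1,2)\) and \((2,1)\), \(P_v\) has size one, so both \(P_v\) and its complement are singletons and hence matchings.
  \item In the case \((2,2)\), \(P_v\) is a perfect matching of the \(2\times 2\) grid, that is, a diagonal or anti-diagonal. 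Its complement is the other one, which is again a perfect matching.
\end{itemize}
This is exactly the table following Proposition~\ref{prop:matching}, read with the roles of \(Z_v\) and its complement exchanged. Corollary~\ref{cor:gentlecriterion} then gives that \(\K Q'/I'\) is gentle.

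The only delicate point is the bookkeeping. It is essential that \(P\) was chosen inside \(\mathcal C(Q')\), so that \(Z'\) is literally the complement of \(P\) and the local part \(P_v\) is the complement of \(Z'_v\) inside \(X\). The hypothesis that the construction ``can be completed'' supplies such a \(P\) satisfying both (a) and (b). No compatibility with \(Z_{\mathrm{for}}\) or \(P_{\mathrm{pref}}\) is needed for gentleness itself; those constraints only affect which completion is chosen.
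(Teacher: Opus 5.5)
Your proof is correct and follows essentially the paper's argument: (G1) from degree-gentleness, (G3) from quadratic generation, (G2) and (G4) from the local matching table via the complementarity of $P$ and $Z'$, and admissibility from the acyclicity of $P$ together with Proposition~\ref{prop:admissibility}. Routing this through Corollary~\ref{cor:gentlecriterion} and checking the table case by case only makes explicit a step the paper states in one line, namely that the complement of $P_v$ is also a matching (for instance, in the $(2,2)$ case the complement of one perfect matching is the other).
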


\begin{proof}
Since \(Q'\) is degree-gentle, every vertex has at most two incoming and at
most two outgoing arrows.  The ideal \(I'\) is generated by length-two paths by
construction.

The local matching condition guarantees the permitted and forbidden
continuation conditions.  Indeed, for each arrow \(\alpha\), there is at most
one arrow \(\beta\) such that \((\alpha,\beta)\in P\), and hence at most one
right continuation \(\beta\alpha\notin I'\).  Similarly, there is at most one
left continuation not in \(I'\).  Since
\[
  Z'=\mathcal C(Q')\setminus P,
\]
the same local matching rules imply that there is at most one zero
continuation on each side.

Finally, \(P\) is acyclic.  Therefore \(T(Q',Z')\) has no oriented cycle.  By
Proposition~\ref{prop:admissibility}, the ideal \(I'\) is admissible.  Hence
\(\K Q'/I'\) is gentle.
\end{proof}

\begin{remark}
Construction~\ref{constr:admissibilityfirst} always produces an admissible
quadratic monomial ideal, because acyclicity is built into the choice of the
permitted transition set \(P\).  
However, if one also wants \(\K Q'/I'\) to be gentle, then the permitted
transition set must satisfy the local matching condition.  This is an
additional constraint.  Therefore the gentle version is not automatic: it
exists precisely when there is an acyclic local matching compatible with the
forced zero-relations \(Z_{\mathrm{for}}\).  If no such acyclic local matching
exists, then the obstruction is not merely an artifact of the construction; it
means that the chosen forced relations cannot be completed to a gentle
admissible ideal using this local mutation rule.
\end{remark}

\begin{example}[The admissibility-first construction on an oriented path]
Let

\begin{equation}\label{}
\nonumber  Q: \xymatrix{
\cdot_{3} \ar[d] & \ar[l]\cdot_{2} \\
	\cdot_{1}  \ar[ur]}.
  \end{equation}
  
and let
\[
  I=\langle \beta\alpha\rangle.
\]
Mutate at \(k=2\).  Then \(Q'=\mu_2(Q)\) has arrows
\[
  \alpha^*:2\to 1,
  \qquad
  \eta:1\to 3,
  \qquad
  \beta^*:3\to 2,
\]
and hence \(Q'\) is the oriented three-cycle

\begin{equation}\label{}
\nonumber   \xymatrix{
\cdot_{3} \ar[d]_{\beta^*} & \ar[l]_{\eta}\cdot_{1} \\
	\cdot_{2}  \ar[ur]_{\alpha^*}}.
  \end{equation}

The initial path \(\beta\alpha\) was zero, so the complement-transpose rule prefers
that the reversed path

\[
  3\xrightarrow{\beta^*}2\xrightarrow{\alpha^*}1
\]
remain permitted.  Thus \((\beta^*,\alpha^*)\) belongs to
\(P_{\mathrm{pref}}\).  If we also prefer to keep the other two length-two paths
permitted, then \(P_{\mathrm{pref}}\) contains
\[
  (\alpha^*,\eta),
  \qquad
  (\eta,\beta^*),
  \qquad
  (\beta^*,\alpha^*).
\]
These three preferred transitions form an oriented cycle in the permitted
transition graph.

Choose the total order
\[
  \eta\prec \beta^*\prec \alpha^*.
\]
The admissibility-first rule keeps only the increasing transitions:
\[
  P=\{(\eta,\beta^*), (\beta^*,\alpha^*)\}.
\]
Therefore
\[
  Z'=\mathcal C(Q')\setminus P
\]
and, among the length-two paths on the three-cycle, this gives
\[
  (\alpha^*,\eta)\in Z'.
\]
Hence one possible admissible ideal is
\[
  I'=\langle \eta\alpha^*\rangle.
\]
The permitted transition graph is now the directed path
\[
  \eta\longrightarrow \beta^*\longrightarrow \alpha^*,
\]
together with isolated vertices.  In particular, it has no oriented cycle.
Therefore \(I'\) is admissible.
\end{example}

\begin{remark}
The guiding principle of this section can be summarized as follows:
\[
  \boxed{
  \text{choose permitted transitions first, make them acyclic, and then define }
  Z'\text{ as their complement.}
  }
\]
This guarantees admissibility.  The gentle version requires the stronger
condition that the chosen acyclic permitted set is also a local matching.
\end{remark}

\subsection{A reversible admissibility criterion}\label{subsec:reversible-admissibility}

Construction~\ref{constr:admissibilityfirst} produces admissible ideals by
choosing a permitted skeleton that is acyclic and then defining the zero
relations as its complement.  Since the choice of the permitted skeleton may
not be unique, the construction is not reversible in general on undecorated
bound quivers.  To recover reversibility, we record the local admissibility
choices in a certificate.

\begin{definition}[The set of admissible completions]
Fix \((Q,I)\) and \(k\), and let \(Q'=\mu_k(Q)\).  Denote by
\[
\mathfrak A_k(Q,I)
\]
the set of all quadratic monomial ideals \(I'=\langle Z'\rangle\subseteq KQ'\)
arising from Construction~\ref{constr:admissibilityfirst} such that \(I'\) is
admissible.
\end{definition}

\begin{proposition}[Uniqueness implies reversibility]\label{prop:unique-admissibility-reversible}
If \(\mathfrak A_k(Q,I)\) is a singleton, then the admissibility mutation is
reversible at \(k\).  In other words, if
\[
\mathfrak A_k(Q,I)=\{I'\},
\]
then applying Construction~\ref{constr:admissibilityfirst} to \((Q',I')\) at
the same vertex \(k\) recovers the original ideal \(I\).
\end{proposition}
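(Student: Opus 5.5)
The plan is to exploit two facts: quiver mutation is an involution, and the rules of Construction~\ref{constr:admissibilityfirst} are built to be locally self-inverse. Throughout, write $I'$ for the unique element of $\mathfrak A_k(Q,I)$ and $Q'=\mu_k(Q)$.

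\textbf{Step 1: the quiver comes back.} Since $\mu_k(Q')=Q$, applying Construction~\ref{constr:admissibilityfirst} to $(Q',I')$ at $k$ produces ideals in $\K Q$. The mutation data match up as follows.
\begin{itemize}[leftmargin=2em]
  \item The reversed arrows $\alpha_i^*,\beta_j^*$ reverse again to $\alpha_i,\beta_j$.
  \item The composite arrows $\eta_{ij}$ of $Q'$ cancel, in the second mutation, against the new arrows $\eta^*$ through $k$.
  \item The arrows of $Q$ that were cancelled in the first mutation are recreated in the second.
\end{itemize}
I will record this bookkeeping explicitly, so that each composable pair of $Q$ is labelled by one of the types (a)–(d) of Step~2, as seen from $Q'$.

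\textbf{Step 2: $I$ is an admissible output of the reverse construction.} I will show $I\in\mathfrak A_k(Q',I')$ by exhibiting a choice of $P$ in Step~4 that returns $Z$. Take $P$ to be the permitted transitions of $I$, that is, $\mathcal C(Q)\setminus Z$. This set is acyclic by Proposition~\ref{prop:admissibility}, because $I$ is admissible. I then need to check that $P$ avoids the forced zero-relations of the reverse pass and lies in its preferred set, and that $\mathcal C(Q)\setminus P=Z$. The check goes type by type.
\begin{itemize}[leftmargin=2em]
  \item \emph{Pairs away from $k$ (rule (a)).} Their status was preserved into $I'$, so preserving it back gives their status in $I$.
  \item \emph{Pairs through $k$ (rule (d)).} The complement-transpose rule applied twice returns the original status of $\beta_j\alpha_i$.
  \item \emph{Remaining pairs (rules (b), (c)).} These are the pairs involving arrows recreated after cancellation, or pairs adjacent to $\eta_{ij}$. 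For them I will use that uniqueness of $\mathfrak A_k(Q,I)$ forces $P_{\mathrm{pref}}$ to have been acyclic, so Step~4(a) applied. Then the statuses in $I'$ of the pairs around $\eta_{ij}$ are exactly the transported statuses of $\alpha_i\rho$ and $\tau\beta_j$, and these can be read back.
\end{itemize}

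\textbf{Step 3: the reverse construction has no other admissible output.} This is the main obstacle, since the hypothesis concerns $\mathfrak A_k(Q,I)$ and not $\mathfrak A_k(Q',I')$. I will argue that the freedom in Step~4 is governed entirely by the cycles of the graph on $P_{\mathrm{pref}}$. By Step~2, the preferred set of the reverse pass is contained in the permitted transitions of $I$, which are acyclic. Hence Step~4(a) applies in the reverse pass, with no choice, and the output is exactly $I$.

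The delicate point is that $P_{\mathrm{pref}}$ is only described by examples (``contains, for example''). I would first try to fix it canonically as the complement of the forced set, and show that with this convention the pairs of types (b) and (c) carry no freedom. If that fails, I would weaken the claim to ``$I$ is the output of the reverse construction with $P=P_{\mathrm{pref}}$'', which is what Step~2 establishes directly.
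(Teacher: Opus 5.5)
\textbf{There is a genuine gap, in Step~3, and it cannot be repaired.} Read as ``the reverse pass recovers $I$'', the statement is false.

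Step~3 needs every zero-relation of $I$ to be forced in the reverse pass, i.e.\ $Z\subseteq Z^{\mathrm{rev}}_{\mathrm{for}}$. Step~2 only needs the opposite inclusion $Z^{\mathrm{rev}}_{\mathrm{for}}\subseteq Z$, so Step~3 does not follow ``by Step~2''. It is also false. Your type-by-type list in Step~2 omits the pairs $(\rho,\alpha_i)$ with $\rho:h\to i$, and $(\beta_j,\tau)$ with $\tau:j\to \ell$. These are the length-two paths of $Q$ that end or start at $k$ without passing through it. In the reverse pass they contain a re-reversed arrow, so rule (a) does not apply, and rules (b)--(d) do not concern them. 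Nothing is ever forced on them. Their status in $I$ was carried forward only by rule (c), onto $(\rho,\eta_{ij})$ and $(\eta_{ij},\tau)$. In the reverse pass, however, every surviving $\eta_{ij}$ cancels against the new arrow $j\to i$, and rule (b) deletes exactly those relations. So your claim that they ``can be read back'' has no rule behind it. If $k$ is a sink or a source there are no $\eta_{ij}$ at all, and the information is simply discarded. Conceptually, uniqueness of $\mathfrak A_k(Q,I)$ only says the forward assignment is single-valued at $(Q,I)$; reversibility needs it to be injective.

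\textbf{Counterexample.} Let $Q:0\xrightarrow{\rho}1\xrightarrow{\alpha}2$ and $k=2$. Then $\mu_2(Q)$ has arrows $\rho:0\to 1$ and $\alpha^*:2\to 1$, and no composable pairs. Hence for both gentle ideals $I=0$ and $I=\langle\alpha\rho\rangle$ one gets $\mathfrak A_2(Q,I)=\{0\}$, so the hypothesis holds in both cases. But the reverse pass starts from the same datum $(\mu_2(Q),0)$, where $Z^{\mathrm{rev}}_{\mathrm{for}}=\varnothing$, so it cannot be forced to return both ideals. With your canonical choice $P_{\mathrm{pref}}=\mathcal C\setminus Z_{\mathrm{for}}$ and Step~4(a), it returns $0$. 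So your fallback claim also fails for $I=\langle\alpha\rho\rangle$.

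The paper's own proof consists only of the assertion that the reverse construction ``is forced to use the same local data''. That is precisely the step you flagged as the main obstacle, and it is not justified there either. At best, your Step~2 can deliver a weaker existential statement: $I$ is one of the possible outputs of the reverse pass. This requires including the omitted pair types (which impose no constraint) and allowing any acyclic $P\subseteq P_{\mathrm{pref}}$ in Step~4.
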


\begin{proof}
The quiver mutation is involutive, so \(\mu_k^2(Q)=Q\).  If the admissible
completion is unique, then there is no ambiguity in the choice of the
permitted skeleton or the cycle-breaking relations.  Hence the reverse
construction is forced to use the same local data, and therefore recovers the
original set of zero-relations.  Thus the original bound quiver \((Q,I)\) is
recovered.
\end{proof}

\begin{remark}
If \(\mathfrak A_k(Q,I)\) has more than one element, then the undecorated
admissibility mutation is generally not reversible, since the construction
forgets which completion was chosen.  In that case, one should work with
decorated bound quivers.
\end{remark}

\begin{definition}[Admissibility certificate]
Fix a bound quiver \((Q,I)\) and a vertex \(k\in Q_0\).  An
\emph{admissibility certificate} at \(k\) is a finite record
\[
\mathcal C_k
\]
containing the local data used in Construction~\ref{constr:admissibilityfirst},
namely:
\begin{enumerate}[label=\textup{(\arabic*)},leftmargin=2.5em]
  \item the forced zero-relations \(Z_{\mathrm{for}}\);
  \item the chosen acyclic permitted skeleton \(P\);
  \item the auxiliary choice used to make \(P\) acyclic, such as a total
  order or a feedback edge set;
  \item the resulting set \(Z'=\mathcal C(Q')\setminus P\) of zero-relations.
\end{enumerate}
\end{definition}

\begin{definition}[Decorated bound quiver]
A \emph{decorated bound quiver} is a triple
\[
(Q,I;\mathcal C),
\]
where \((Q,I)\) is a bound quiver and \(\mathcal C\) is an admissibility
certificate.  We write \(\mathcal C_k\) when the certificate is attached to
mutation at the vertex \(k\).
\end{definition}

\begin{definition}[Reversible admissibility mutation]
Let \((Q,I;\mathcal C)\) be a decorated bound quiver and let
\[
Q'=\mu_k(Q).
\]
A \emph{reversible admissibility mutation} at \(k\) is the assignment
\[
(Q,I;\mathcal C)\longmapsto (Q',I';\mathcal C')
\]
defined by the following steps:
\begin{enumerate}[label=\textbf{Step \arabic*.},leftmargin=3em]
  \item Apply the quiver mutation at \(k\) to obtain \(Q'=\mu_k(Q)\).
  \item Record the forced zero-relations \(Z_{\mathrm{for}}\) exactly as in
  Construction~\ref{constr:admissibilityfirst}.
  \item Choose an acyclic permitted skeleton \(P\subseteq P_{\mathrm{pref}}\)
  and record this choice in the certificate \(\mathcal C_k\).
  \item Define
  \[
    Z'=\mathcal C(Q')\setminus P
    \qquad\text{and}\qquad
    I'=\langle \beta\alpha \mid (\alpha,\beta)\in Z'\rangle\subseteq KQ'.
  \]
  \item Set \(\mathcal C'=(Z_{\mathrm{for}},P,\mathcal C_k)\).
\end{enumerate}
We denote this decorated mutation by
\[
\widetilde{\mu}_k(Q,I;\mathcal C)=(Q',I';\mathcal C').
\]
\end{definition}

\begin{proposition}[Admissibility of the decorated mutation]\label{prop:decorated-admissibility}
Let \((Q,I;\mathcal C)\) be a decorated bound quiver and let
\[
(Q',I';\mathcal C')=\widetilde{\mu}_k(Q,I;\mathcal C)
\]
be obtained by reversible admissibility mutation.  Then \(I'\) is admissible.
\end{proposition}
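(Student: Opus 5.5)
The argument is a reduction to Proposition~\ref{prop:admissibility}, in the same way as the proof of Proposition~\ref{prop:admissibility-first-guarantee}. The certificate plays no role in whether \(I'\) is admissible. It only records choices, and the ideal \(I'\) depends on nothing except the set \(P\) chosen in Step 3 of the decorated mutation.

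First, I will show that \(I'\subseteq \rad_{Q'}^2\). By Step 4, \(I'\) is generated by the length-two paths \(\beta\alpha\) with \((\alpha,\beta)\in Z'\). These generators lie in \(\rad_{Q'}^2\), so \(I'\) is a quadratic monomial ideal contained in \(\rad_{Q'}^2\).

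Second, I will identify the permitted transition graph. Since \(Z'=\mathcal C(Q')\setminus P\), a composable pair \((\alpha,\beta)\) is a permitted transition in \(T(Q',Z')\) exactly when \((\alpha,\beta)\in P\). So \(T(Q',Z')\) is precisely the directed graph on \(Q'_1\) whose arrows are given by \(P\). Step 3 requires \(P\subseteq P_{\mathrm{pref}}\) to be an acyclic permitted skeleton, chosen in the same way as Step 4 of Construction~\ref{constr:admissibilityfirst} (directly, by a total order, or by removing a feedback edge set). Therefore \(T(Q',Z')\) has no oriented cycle.

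Third, Proposition~\ref{prop:admissibility} now gives that \(I'\) is admissible. For a concrete bound I will repeat the argument of Proposition~\ref{prop:admissibility-first-guarantee}. Every directed path in an acyclic graph on \(|Q'_1|\) vertices has fewer than \(|Q'_1|\) edges. Hence every path of length at least \(|Q'_1|+1\) in \(Q'\) contains a consecutive pair lying in \(Z'\). This gives \(\rad_{Q'}^{|Q'_1|+1}\subseteq I'\).

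The only step needing care is checking that "acyclic permitted skeleton" in Step 3 carries the same acyclicity requirement as in Construction~\ref{constr:admissibilityfirst}. Equivalently, \((Q',I')\) with the decoration forgotten is exactly an output of that construction, so we may also conclude by invoking Proposition~\ref{prop:admissibility-first-guarantee} directly.
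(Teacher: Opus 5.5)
Your proposal is correct and follows the paper's proof. As there, $I'\subseteq \rad_{Q'}^2$ because it is generated by length-two paths, $T(Q',Z')$ coincides with the acyclic skeleton $P$ since $Z'=\mathcal C(Q')\setminus P$, and Proposition~\ref{prop:admissibility} then gives admissibility. The explicit bound $\rad_{Q'}^{|Q'_1|+1}\subseteq I'$, taken from the proof of Proposition~\ref{prop:admissibility-first-guarantee}, is a valid extra detail that the paper's proof of this statement leaves implicit.
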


\begin{proof}
By construction, \(I'\) is generated by length-two paths, so
\[
I'\subseteq \rad_{Q'}^2.
\]
Moreover, the chosen permitted skeleton \(P\) is acyclic.  Therefore the
permitted transition graph \(T(Q',Z')\) has no oriented cycle.  By
Proposition~\ref{prop:admissibility}, this implies that \(I'\) is admissible.
Hence
\[
\rad_{Q'}^m\subseteq I'\subseteq \rad_{Q'}^2
\]
for some \(m\geq 2\).
\end{proof}

\begin{proposition}[Reversibility on decorated bound quivers]\label{prop:decorated-admissibility-reversible}
The reversible admissibility mutation is involutive on decorated bound quivers.
More precisely,
\[
\widetilde{\mu}_k^2(Q,I;\mathcal C)=(Q,I;\mathcal C).
\]
\end{proposition}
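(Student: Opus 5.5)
The plan is to make precise what the reverse mutation reads from the certificate, and then check that the second application of \(\widetilde{\mu}_k\) reconstructs \((Q,I;\mathcal C)\) component by component: quiver, ideal, certificate. The quiver part is immediate, since \(\mu_k^2(Q)=Q\) for cluster quivers (the arrows \(\alpha_i^*,\beta_j^*\) reverse back to \(\alpha_i,\beta_j\), the composites \(\eta_{ij}\) cancel against the newly created arrows \(j\to i\), and arrows cancelled in the first step are recreated). So the content of the statement is the ideal and the certificate.

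\textbf{Reading the certificate.} Applying \(\widetilde{\mu}_k\) to \((Q',I';\mathcal C')\), where \(\mathcal C'=(Z_{\mathrm{for}},P,\mathcal C_k)\), I interpret Step 3 as follows. The choice of the acyclic skeleton is not free; it is read off from the recorded data. Concretely, the second mutation takes as skeleton
\[
P^{\mathrm{rev}}=\mathcal C(Q)\setminus Z,
\]
which is the permitted set of the original ideal \(I\). This set is recoverable because \(\mathcal C'\) contains the previous certificate \(\mathcal C_k\), and hence the original zero set. With this choice, Step 4 gives \(Z''=\mathcal C(Q)\setminus P^{\mathrm{rev}}=Z\), so \(I''=\langle Z\rangle=I\). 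Step 5 then outputs a certificate from which the last recorded layer is removed, returning \(\mathcal C\). I will make this popping convention explicit, since the statement requires the certificate to be recovered on the nose.

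\textbf{Legitimacy of the choice.} It remains to show that \(P^{\mathrm{rev}}\) is an admissible choice in Step 3 of the reverse mutation. Two things are needed.

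1. \(P^{\mathrm{rev}}\) must be acyclic. This holds by Proposition~\ref{prop:admissibility}, since \(I\) is admissible and therefore \(T(Q,Z)\) is acyclic.

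2. \(P^{\mathrm{rev}}\) must be disjoint from the reverse forced set \(Z^{\mathrm{rev}}_{\mathrm{for}}\) and contained in \(P^{\mathrm{rev}}_{\mathrm{pref}}\). I will check this rule by rule in Step 2 of Construction~\ref{constr:admissibilityfirst}.
   \begin{itemize}
   \item Rule (d) is involutive. Apply complement-transpose twice at \(k\): a pair \((\alpha_i,\beta_j)\) is forced zero after two steps exactly when \(\alpha_i^*\beta_j^*\) is permitted in \(I'\). By the first step this happens exactly when \(\beta_j\alpha_i\in I\).
   \item Rule (c) transports relations along \(\eta_{ij}\) in the first step. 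In the second step these arrows disappear, so rule (b) deletes the corresponding relations and they cannot clash.
   \item Rule (a) preserves statuses away from \(k\).
   \end{itemize}

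\textbf{Main obstacle.} The difficulty is rule (a), together with the arrows recreated from cancelled 2-cycles. The issue is that \(P\subseteq P_{\mathrm{pref}}\) may have discarded some originally permitted paths in order to break cycles. If so, the reverse mutation's forced set \(Z^{\mathrm{rev}}_{\mathrm{for}}\) would contain pairs that lie in \(\mathcal C(Q)\setminus Z\), and \(P^{\mathrm{rev}}\) would be an illegal choice. My first attempt to resolve this is to let the forced data of the reverse step be computed relative to the certificate. That is, a pair is forced zero in the reverse step only if it lies in the recorded \(Z_{\mathrm{for}}\)-part coming from \(I\), not merely because it lies in \(Z'\) through the cycle-breaking choice, which is recorded separately in \(\mathcal C_k\). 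With that refinement, rule (a) of the reverse step reproduces exactly the original zero-relations away from \(k\). For the recreated arrows, which are unconstrained by rule (b), I would use the stored \(Z\) to restore their relations. With this in place the three components agree, and \(\widetilde{\mu}_k^2(Q,I;\mathcal C)=(Q,I;\mathcal C)\).
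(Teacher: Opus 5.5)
Your strategy is the paper's own: quiver involutivity plus a second mutation whose choices are dictated by stored data, which the paper calls the inverse certificate $\mathcal C^{-1}$. You are also right that the construction as literally written does not undo itself. The gap is in the step everything rests on, namely that $\mathcal C'$ lets you read off $Z$. Rule (b) erases every relation on an arrow removed by 2-cycle cancellation, and nothing in $(Z_{\mathrm{for}},P,\mathcal C_k)$ remembers it.

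Take $\alpha\colon i\to k$, $\beta\colon k\to j$, $\gamma\colon j\to i$, $\rho\colon h\to j$, with the gentle ideals $I_1=\langle\gamma\beta\rangle$ and $I_2=\langle\gamma\rho,\alpha\gamma\rangle$. Mutating at $k$ cancels $\gamma$ against $\eta_{ij}$, so $Q'$ is $h\xrightarrow{\rho}j\xrightarrow{\beta^*}k\xrightarrow{\alpha^*}i$. In both cases $Z_{\mathrm{for}}=\{(\beta^*,\alpha^*)\}$, with the same candidates for $P$ and the same $Z'$. Hence $(Q,I_1;\mathcal C_0)$ and $(Q,I_2;\mathcal C_0)$ have the same image for the same choice, so $\widetilde\mu_k$ as defined is not injective. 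Your ``hence the original zero set'' is therefore a non sequitur. Your popping step likewise presupposes that $\mathcal C'$ contains $\mathcal C$, which the definition does not say.

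Your legitimacy check also fails at rule (c). In the second mutation the recreated arrow $\gamma$ is itself a new composite arrow, so reverse rule (c) puts $(\rho,\gamma)$ into $Z^{\mathrm{rev}}_{\mathrm{for}}$ whenever $\beta^*\rho\in I'$. For $I=I_1$ and $P=\varnothing$ this forces $\gamma\rho=0$, although $(\rho,\gamma)\in\mathcal C(Q)\setminus Z_1=P^{\mathrm{rev}}$. No cycle-breaking is involved here, since $Q'$ is acyclic. So the obstruction is not confined to cycle-breaking under rule (a), and recreated arrows are not unconstrained.

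Your ``exactly when'' for rule (d) is also only one inclusion. The paper's own choice $\langle\alpha^*\beta^*\rangle$ for $I=\langle\beta\alpha\rangle$ is a counterexample, although that inclusion is all you actually need.

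The honest repair is definitional. Require $\mathcal C'$ to store $Z$ and $\mathcal C$, and define the reverse step to output them, overriding all reverse forced rules. Then the identity holds by definition and your rule-by-rule check is superfluous. The paper's one-line proof establishes no more than this; it glosses over the same point through the undefined $\mathcal C^{-1}$.
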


\begin{proof}
The quiver mutation is involutive, so \(\mu_k^2(Q)=Q\).  The certificate
\(\mathcal C\) stores the full local choice of forced relations, permitted
skeleton, and cycle-breaking data.  Therefore the second mutation uses the
inverse certificate \(\mathcal C^{-1}\) to recover the same zero-relations and
the same permitted transitions in reverse.  Consequently, the original ideal
\(I\) is recovered.
\end{proof}

\begin{example}[A non-reversible undecorated completion]
Let

\[
Q:\qquad 1\xrightarrow{\alpha}2\xrightarrow{\beta}3,
\qquad
I=\langle \beta\alpha\rangle.
\]
Mutating at \(k=2\) gives the quiver
\begin{equation}\label{}
\nonumber   \xymatrix{
\cdot_{3} \ar[d]_{\beta^*} & \ar[l]_{\eta}\cdot_{1} \\
	\cdot_{2}  \ar[ur]_{\alpha^*}}.
  \end{equation}
The quiver \(Q'\) is an oriented \(3\)-cycle.  There are several admissible
choices of quadratic monomial ideals on \(Q'\), for example
\[
\langle \eta\alpha^*\rangle,\qquad
\langle \beta^*\eta\rangle,\qquad
\langle \alpha^*\beta^*\rangle.
\]
Without a certificate, the construction does not remember which completion was
chosen, so the inverse step is not canonical.

If the certificate records, say, the choice
\[
P=\{(\eta,\beta^*),(\beta^*,\alpha^*)\},
\]
then the resulting ideal is
\[
I'=\langle \eta\alpha^*\rangle.
\]
Applying the inverse decorated mutation at \(k=2\) recovers the original ideal
\[
I=\langle \beta\alpha\rangle.
\]
\end{example}

\begin{remark}
Thus the admissibility-first construction becomes reversible after passing to
decorated bound quivers.  The undecorated construction remains useful for
producing admissible ideals, but the decoration is what makes the mutation
involutive.
\end{remark}

\subsection{A reversible gentle-completion criterion}\label{subsec:reversible-gentle-completion}

Let \((Q,I)\) be a bound quiver and let \(Q'=\mu_k(Q)\).  Assume that the
admissibility-first procedure of Construction~\ref{constr:admissibilityfirst}
produces at least one gentle completion on \(Q'\).  For a given choice of local
matching and cycle-breaking relations, let
\[
I'=\langle Z'\rangle\subseteq KQ'
\]
be the resulting quadratic monomial ideal.  We say that \(I'\) is a
\emph{gentle completion} of \((Q,I)\) at \(k\) if \(KQ'/I'\) is gentle.

\begin{definition}[The set of gentle completions]
Fix \((Q,I)\) and \(k\).  Denote by
\[
\mathfrak G_k(Q,I)
\]
the set of all quadratic monomial ideals \(I'=\langle Z'\rangle\subseteq KQ'\)
arising from Construction~\ref{constr:admissibilityfirst} such that
\(KQ'/I'\) is gentle.
\end{definition}

\begin{proposition}[Uniqueness implies reversibility]\label{prop:uniqueness-reversible}
If \(\mathfrak G_k(Q,I)\) is a singleton, then the gentle-completion criterion
is reversible at \(k\).  In other words, if
\[
\mathfrak G_k(Q,I)=\{I'\},
\]
then applying the same construction to \((Q',I')\) at the same vertex \(k\)
recovers the original ideal \(I\).
\end{proposition}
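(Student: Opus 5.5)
The plan is to follow the proof of Proposition~\ref{prop:unique-admissibility-reversible}, adapted to the gentle setting. I would also use the complement-transpose rule of Step~2(d) to check that the local data at \(k\) are genuinely recovered.

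First, quiver mutation is involutive, so \(\mu_k(Q')=\mu_k^2(Q)=Q\). The arrows \(\alpha_i^*,\beta_j^*\) reverse back to \(\alpha_i,\beta_j\). The composite arrows \(\eta_{ij}\) are cancelled in the second mutation against the newly created arrows \(\beta_j^*\alpha_i^*\)-composites. Arrows that were cancelled in the first step reappear. Hence the underlying quiver for the reverse construction is \(Q\), with arrows identified canonically. Next I would compute the forced relations \(Z'_{\mathrm{for}}\) for \((Q',I')\) at \(k\).

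\begin{itemize}[leftmargin=2em]
\item \textbf{Rule (a).} Relations away from the neighbourhood of \(k\) were transported unchanged, so they return unchanged.
\item \textbf{Rule (d), applied twice.} A path \(\beta_j\alpha_i\notin I\) became \(\alpha_i^*\beta_j^*\in I'\). Applied again, this makes \(\beta_j\alpha_i\) preferred permitted. A path \(\beta_j\alpha_i\in I\) made \((\beta_j^*,\alpha_i^*)\) preferred permitted. Because \(I'\) is gentle and the local matching rules at \(k\) hold, this pair is actually permitted in \(I'\). Rule (d) then forces \((\alpha_i,\beta_j)\in Z'_{\mathrm{for}}\).
\item \textbf{Rule (c).} Relations \(\eta_{ij}\rho\in I'\) encode \(\alpha_i\rho\in I\). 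Reading (c) backwards recovers those relations of \(I\) adjacent to \(k\).
\end{itemize}

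So the forced and preferred data of the reverse construction agree with the status of paths in \(I\), at least on the pairs they cover. Since \((Q,I)\) is itself gentle, \(I\) is a gentle completion for \((Q',I')\) at \(k\), i.e.\ \(I\in\mathfrak G_k(Q',I')\).

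The remaining step is to show that \(I\) is the only element of \(\mathfrak G_k(Q',I')\), so that the reverse construction is forced to output \(I\). I would argue by transport. Suppose \(J\in\mathfrak G_k(Q',I')\) with \(J\neq I\). Mutating \((Q,J)\) forward at \(k\) would, by the same bookkeeping, again produce compatible forced data. I would then try to show that it yields a gentle completion of \((Q,I)\) different from \(I'\), contradicting the singleton hypothesis. The comparison with \(I'\) would be made at a vertex where \(J\) and \(I\) differ, using the table of local matchings to see that the forced relations determine the local choice there.

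This last step is the main obstacle. It requires that distinct gentle completions on \(Q\) give distinct completions on \(Q'\), i.e.\ that the local matchings near \(k\) are determined by the forced data. In the \((2,2)\) case only the diagonal versus anti-diagonal choice exists. If necessary I would interpret ``recovers'' as the statement that the transported forced and permitted data coincide with those of \(I\), exactly as in the proof of Proposition~\ref{prop:unique-admissibility-reversible}.
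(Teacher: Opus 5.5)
\textbf{The step you flag cannot be closed.} You correctly locate the crux: distinct ideals on \(Q\) must give distinct ideals on \(Q'\), i.e.\ \(I\mapsto I'\) must be injective. But this is not merely hard, it is false. Falling back on ``the transported data coincide with those of \(I\)'' just restates the claim. The paper's own proof has the same hole: it asserts, without argument, that uniqueness of the forward completion forces the reverse construction to use the same local data.

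Uniqueness of \(\mathfrak G_k(Q,I)\) only says the forward assignment is single-valued. Mutation can erase the information needed to invert it. Take \(Q:1\xrightarrow{\alpha}2\xrightarrow{\beta}3\) and \(k=1\), a source. Then \(Q'=\mu_1(Q)\) is \(1\xleftarrow{\alpha^*}2\xrightarrow{\beta}3\). This quiver is degree-gentle and has no composable pairs, so \(\mathcal C(Q')=\varnothing\), and the only possible output is \(I'=0\), which is gentle. Hence \(\mathfrak G_1(Q,0)=\mathfrak G_1(Q,\langle\beta\alpha\rangle)=\{0\}\). These are two different gentle ideals, both with singleton completion sets, both sent to the same bound quiver \((Q',0)\). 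Any procedure applied to \((Q',0)\) at \(1\) sees the same input in both cases, so it cannot return \(0\) for one and \(\langle\beta\alpha\rangle\) for the other.

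Concretely, when mutating back:
\begin{enumerate}[label=(\roman*),leftmargin=2em]
  \item vertex \(1\) is a sink of \(Q'\), so Step~2(d) forces nothing;
  \item Step~2(a) does not apply to \((\alpha,\beta)\), because \(\alpha\) is a reversed arrow;
  \item hence \(Z'_{\mathrm{for}}=\varnothing\), and the status of \(\beta\alpha\) is settled by the preference/choice step, identically for both initial ideals.
\end{enumerate}
So the proposition as stated fails. It needs a hypothesis on the reverse side, e.g.\ \(\mathfrak G_k(Q',I')=\{I\}\), at which point it becomes a tautology.

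\textbf{Your intermediate bookkeeping is also flawed.} Even the claim \(I\in\mathfrak G_k(Q',I')\) is not established.
\begin{enumerate}[label=(\roman*),leftmargin=2em]
  \item In your Step~2(d) bullet, a preferred pair \((\beta_j^*,\alpha_i^*)\) need not be permitted in \(I'\). At a \((1,1)\) vertex both choices satisfy the matching table. The paper itself lists \(\langle\alpha^*\beta^*\rangle\) as a gentle completion of \((1\to 2\to 3,\langle\beta\alpha\rangle)\) at \(2\). Mutating that back, nothing forces \(\beta\alpha=0\), and Step~3(c) actually prefers it permitted.
  \item Rule~(c) does not run backwards. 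Every surviving \(\eta_{ij}\) is cancelled in \(\mu_k(Q')\) against the new composite \(j\to i\), so Step~2(b) deletes the relations \(\eta_{ij}\rho\). The pair \((\rho,\alpha_i)\) of \(Q\), a surviving arrow followed by a reversed one, is covered by no forced rule.
  \item Step~2(a) returns only what \(I'\) kept. The completion may kill an unaffected pair that was permitted in \(I\), for instance to break a new cycle through some \(\eta_{ij}\). In that case the reverse construction forces that pair to zero, and \(I\notin\mathfrak G_k(Q',I')\).
\end{enumerate}
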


\begin{proof}
The quiver mutation is involutive, so \(\mu_k^2(Q)=Q\).  If the gentle
completion is unique, then there is no ambiguity in the choice of local
matching or cycle-breaking relations.  Hence the reverse construction is forced
to use the same local data, and therefore recovers the original set of
zero-relations.  Thus the original bound quiver \((Q,I)\) is recovered.
\end{proof}

\begin{remark}
If \(\mathfrak G_k(Q,I)\) has more than one element, then the undecorated
gentle completion is generally not reversible, since the mutation process
forgets which completion was chosen.
\end{remark}

\begin{definition}[Decorated gentle completion]
To obtain a reversible theory in the non-unique case, we enlarge the objects to
decorated bound quivers
\[
(Q,I;\mathcal C),
\]
where \(\mathcal C\) is a certificate recording:
\begin{enumerate}[label=\textup{(\arabic*)},leftmargin=2.5em]
  \item the chosen local matching at each affected vertex;
  \item the cycle-breaking zero-relations inserted by the construction;
  \item the forced transported relations arising from the mutation at \(k\).
\end{enumerate}
\end{definition}

\begin{construction}[Reversible gentle mutation]\label{constr:reversible-gentle-mutation}
Let \((Q,I;\mathcal C)\) be a decorated bound quiver and let \(Q'=\mu_k(Q)\).
Choose a gentle completion \(I'=\langle Z'\rangle\subseteq KQ'\) using
Construction~\ref{constr:admissibilityfirst}, and let \(\mathcal C'\) be the
certificate recording all local choices made in the completion.

Define
\[
\widetilde{\mu}_k(Q,I;\mathcal C)=(Q',I';\mathcal C').
\]
The inverse mutation is defined by using the inverse certificate
\(\mathcal C'^{-1}\), which restores the original local choices when the same
vertex \(k\) is mutated again.
\end{construction}

\begin{proposition}[Involutivity on decorated objects]\label{prop:decorated-gentle-involutive}
On decorated bound quivers, the reversible gentle mutation is involutive:
\[
\widetilde{\mu}_k^2(Q,I;\mathcal C)=(Q,I;\mathcal C).
\]
\end{proposition}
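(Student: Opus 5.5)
The plan is to imitate the proof of Proposition~\ref{prop:decorated-admissibility-reversible}, with the gentle-completion certificate in place of the admissibility certificate. Write \((Q',I';\mathcal C')=\widetilde{\mu}_k(Q,I;\mathcal C)\) and \((Q'',I'';\mathcal C'')=\widetilde{\mu}_k(Q',I';\mathcal C')\). We must show \(Q''=Q\), \(I''=I\) and \(\mathcal C''=\mathcal C\).

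\textbf{Step 1: the quiver.} The first component comes from involutivity of quiver mutation, \(\mu_k^2(Q)=Q\). Track the arrow names: \((\alpha_i^*)^*=\alpha_i\) and \((\beta_j^*)^*=\beta_j\). Each composite arrow \(\eta_{ij}\) of \(Q'\) cancels in \(Q''\) against the new arrow \(j\to i\) produced by the path \(j\xrightarrow{\beta_j^*}k\xrightarrow{\alpha_i^*}i\). Each arrow \(j\to i\) of \(Q\) that was cancelled in the first step is recreated by this same rule. This gives a canonical identification \(Q''_1=Q_1\), and hence \(\mathcal C(Q'')=\mathcal C(Q)\), which Step 2 needs.

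\textbf{Step 2: the relations.} By Construction~\ref{constr:reversible-gentle-mutation}, the second mutation does not make fresh choices: it reads the inverse certificate \(\mathcal C'^{-1}\). Split the composable pairs of \(Q''=Q\) into three kinds.
\begin{enumerate}[label=(\roman*),leftmargin=2.5em]
  \item Pairs away from the neighbourhood of \(k\). Rule (a) of Step 2 of Construction~\ref{constr:admissibilityfirst} preserves their status twice, so they keep their original status.
  \item Pairs through \(k\). The complement-transpose rule (d) applied twice sends a permitted path \(\beta_j\alpha_i\) to a zero path \(\alpha_i^*\beta_j^*\) and then back to a permitted path. Likewise a zero path becomes preferred permitted and then zero again. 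So the status is recovered, provided the preferred choice recorded in \(\mathcal C'^{-1}\) is honoured.
  \item Pairs involving the arrows deleted or recreated in Step 1, or the composites \(\eta_{ij}\). Rules (b) and (c) lose information here, so the certificate must supply the data. This is exactly what items (1)--(3) of the decorated gentle completion record: the local matchings at the affected vertices, the cycle-breaking relations, and the transported relations.
\end{enumerate}
Reading these back gives \(Z''=Z\), hence \(I''=I\). The certificate produced by the second mutation records exactly the choices read off from \(\mathcal C'^{-1}\), which are those of \(\mathcal C\), so \(\mathcal C''=\mathcal C\).

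\textbf{Main obstacle.} The serious difficulty is case (iii) together with the meaning of ``inverse certificate''. The paper does not define \(\mathcal C'^{-1}\) formally. I would define it explicitly as follows: \(\mathcal C'\) stores the pair \((Z,Z')\) restricted to the pairs at \(I_k\cup O_k\cup\{k\}\), transported along the identification of Step 1. Then \(\mathcal C'^{-1}\) is the same data with the roles of \(Z\) and \(Z'\) swapped, and the reverse mutation is instructed to choose \(P=\mathcal C(Q)\setminus Z\) locally.

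Two checks then remain. First, this choice is admissible for the reverse construction: it is an acyclic local matching by Corollary~\ref{cor:gentlecriterion} applied to the gentle algebra \(\K Q/I\). Second, it is compatible with the forced relations \(Z_{\mathrm{for}}\) computed from \((Q',I')\). The second check is where I expect the real work. I would first verify it vertex by vertex using the \((2,2)\), \((1,2)\) and \((2,1)\) table at the vertices of \(I_k\cup O_k\). If that fails, I would weaken the statement so that the certificate overrides \(Z_{\mathrm{for}}\) on affected pairs, which makes the involutivity tautological.
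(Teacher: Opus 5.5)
Your Step 1 is fine. Your treatment of pairs through $k$ is also fine: in the second mutation, rule (d) forces $(\alpha_i,\beta_j)$ to be zero only when $\alpha_i^*\beta_j^*\notin I'$. By rule (d) of the first mutation, that can only happen when $\beta_j\alpha_i\in I$.

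The gap is your case (i), together with the ``second check'' you leave open, which is false in general. Rule (a) carries only \emph{zero} status into $Z_{\mathrm{for}}$. A pair permitted in $I$ merely lands in $P_{\mathrm{pref}}$, and the gentle completion may be forced to kill it to break a new oriented cycle through a composite arrow, even at a vertex far from $k$.

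Concretely, let $Q$ have arrows $\alpha:1\to2$, $\beta:2\to3$, $\tau:3\to4$, $\rho:4\to1$, $\sigma:1\to5$, $\mu:6\to3$, and let $I=\langle\beta\alpha,\sigma\rho,\tau\mu\rangle$. This is gentle, and its permitted transition graph is the chain $\beta\to\tau\to\rho\to\alpha$. Mutating at $k=2$ gives $\eta:1\to3$ and $Z_{\mathrm{for}}=\{(\rho,\sigma),(\mu,\tau)\}$. So the perfect matchings at the $(2,2)$-vertices $1$ and $3$ of $Q'$ are forced to be $\{(\rho,\eta),(\alpha^*,\sigma)\}$ and $\{(\eta,\tau),(\mu,\beta^*)\}$. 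The new cycle $1\xrightarrow{\eta}3\xrightarrow{\tau}4\xrightarrow{\rho}1$ can then only be broken at vertex $4$, so every gentle completion contains $\rho\tau$.

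Now mutate back at $2$. Rule (a) puts $(\tau,\rho)$ into $Z''_{\mathrm{for}}$, although $\rho\tau\notin I$. Hence $Z''_{\mathrm{for}}\not\subseteq Z$, and no choice compatible with $Z''_{\mathrm{for}}$ returns $I$. Moreover $4\notin I_k\cup O_k\cup\{k\}$. So neither your neighbourhood-restricted certificate nor your fallback (overriding $Z_{\mathrm{for}}$ only on affected pairs) ever sees this pair.

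The missing ingredient is what the paper builds into its definitions. First, the certificate records \emph{all} cycle-breaking zero-relations inserted anywhere in $Q'$ (item (2) of the decorated gentle completion), not just data at $I_k\cup O_k\cup\{k\}$. Second, by Construction~\ref{constr:reversible-gentle-mutation}, the inverse certificate $\mathcal C'^{-1}$ is defined as the instruction that restores the original local choices, overriding the forced rules where necessary.

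The paper's proof is then just $\mu_k^2(Q)=Q$ plus this definition: transported relations are sent back and every inserted cycle-breaking relation is removed. The paper attempts no compatibility check, and the example shows none could succeed in general. Your fallback is therefore the right idea, but only with a global certificate that overrides $Z''_{\mathrm{for}}$ wherever a recorded choice demands it. Your main line, deriving recovery from the forced rules, cannot be completed.
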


\begin{proof}
The quiver part is involutive, since \(\mu_k^2(Q)=Q\).  The certificate
\(\mathcal C\) stores the local matching and cycle-breaking choices used to
construct \(I'\).  Therefore, when the construction is applied again at the
same vertex using the inverse certificate, every transported relation is sent
back to its original position, every inserted cycle-breaking relation is
removed, and the original ideal \(I\) is recovered.
\end{proof}

\begin{example}[Non-reversible without a certificate]
Let
\[
Q:\qquad 1\xrightarrow{\alpha}2\xrightarrow{\beta}3,
\qquad
I=\langle \beta\alpha\rangle.
\]
Mutating at \(k=2\) gives the quiver

\begin{equation}\label{}
\nonumber Q'=\mu_2(Q):  \xymatrix{
\cdot_{3} \ar[d]_{\beta^*} & \ar[l]_{\eta}\cdot_{1} \\
	\cdot_{2}  \ar[ur]_{\alpha^*}}.
  \end{equation}

The quiver \(Q'\) is a triangle, and there are several gentle completions on
\(Q'\), for example
\[
\langle \eta\alpha^*\rangle,\qquad
\langle \beta^*\eta\rangle,\qquad
\langle \alpha^*\beta^*\rangle.
\]
Thus \(\mathfrak G_2(Q,I)\) is not a singleton, and the undecorated gentle
completion is not reversible.

If, however, the certificate records that the chosen completion is
\(\langle \eta\alpha^*\rangle\), then the inverse decorated mutation at \(2\)
recovers \(\langle \beta\alpha\rangle\).
\end{example}

\begin{remark}
Therefore, the right reversible formulation is:

\[
\text{undecorated gentle completion is reversible only when the completion is unique;}
\]
otherwise, one should work with decorated bound quivers
\((Q,I;\mathcal C)\), for which the mutation is involutive.
\end{remark}

\section{Classes for which construction~\ref{constr:gentle-admissible-completion} produces gentle algebras}
\label{sec:classes-gentle-relation-mutation}

In this section we record several useful classes of mutations for which the
relation-mutation construction produces a gentle algebra, and we also describe
classes where no such conclusion is possible.  Throughout, let
\((Q,I)\) be a gentle bound quiver with
\[
  I=\langle Z\rangle \subseteq \K Q,
\]
where \(Z\) is a set of zero length-two paths.  Let
\[
  Q'=\mu_k(Q)
\]
be the quiver obtained by mutating \(Q\) at a vertex \(k\).  We use the
notation and terminology of Construction~\ref{constr:admissibilityfirst} and
Construction ~\ref{constr:gentle-admissible-completion}.  Thus
\(Z_{\mathrm{for}}\) denotes the forced zero-relations after mutation,
\(P\) denotes the chosen set of permitted transitions in \(Q'\), and
\[
  Z'=\mathcal C(Q')\setminus P,
  \qquad
  I'=\langle Z'\rangle\subseteq \K Q'.
\]

The admissibility-first construction guarantees admissibility by choosing the
permitted transitions to be acyclic.  However, gentleness is stronger than
admissibility.  In addition to admissibility, we must require the mutated quiver
\(Q'\) to satisfy the degree condition and the permitted and forbidden
continuation conditions at every arrow.

\subsection{The general criterion}

We begin with a compact criterion describing exactly what the construction must
achieve in order to produce a gentle algebra.

\begin{definition}[Gentle-completable mutation datum]
\label{def:gentle-completable-mutation-datum}
Let \((Q,I)\) be a gentle bound quiver and let \(Q'=\mu_k(Q)\).  We say that
\((Q,I,k)\) is \emph{gentle-completable} if the following conditions hold.
\begin{enumerate}[label=\textup{(G\arabic*)},leftmargin=3em]
  \item The mutated quiver \(Q'\) is degree-gentle, that is,
  \[
    \deg_{Q'}^+(v)\leq 2
    \qquad\text{and}\qquad
    \deg_{Q'}^-(v)\leq 2
  \]
  for every vertex \(v\in Q'_0\).

  \item The forced zero-relations \(Z_{\mathrm{for}}\) are locally compatible
  with a matching choice.  Equivalently, for every vertex \(v\in Q'_0\), if
  \[
    \mathcal C_v(Q')=
    \operatorname{In}_{Q'}(v)\times \operatorname{Out}_{Q'}(v),
  \]
  then there exists a subset
  \[
    P_v\subseteq \mathcal C_v(Q')\setminus Z_{\mathrm{for}}
  \]
  satisfying the local matching table of
  Definition~\ref{def:acyclic-local-matching}.

  \item The local choices \(P_v\) can be chosen globally so that
  \[
    P=\bigcup_{v\in Q'_0}P_v
  \]
  is acyclic as a permitted transition graph.
\end{enumerate}
\end{definition}

\begin{proposition}[Gentle-completion criterion]
\label{prop:gentle-completion-criterion}
Let \((Q,I)\) be a gentle bound quiver and let \(Q'=\mu_k(Q)\).  If
\((Q,I,k)\) is gentle-completable, then Construction~\ref{constr:gentle-admissible-completion}
produces a quadratic monomial ideal
\[
  I'=\langle Z'\rangle\subseteq \K Q'
\]
such that \(\K Q'/I'\) is a gentle algebra.
\end{proposition}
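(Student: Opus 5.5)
The plan is to show that the three conditions of Definition~\ref{def:gentle-completable-mutation-datum} let us complete Construction~\ref{constr:gentle-admissible-completion}. Proposition~\ref{prop:gentle-admissible-completion} then gives gentleness directly. So the whole argument reduces to exhibiting an acyclic local matching
\[
P\subseteq \mathcal C(Q')\setminus Z_{\mathrm{for}}
\]
in the sense of Definition~\ref{def:acyclic-local-matching}.

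First, Steps 1--3 of Construction~\ref{constr:admissibilityfirst} are always performable. They produce \(Z_{\mathrm{for}}\) and \(P_{\mathrm{pref}}\) from \((Q,I)\) with no further hypotheses. By condition (G1), \(Q'\) is degree-gentle, which is the standing assumption of both Definition~\ref{def:acyclic-local-matching} and Proposition~\ref{prop:gentle-admissible-completion}.

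Next, condition (G2) supplies, for each \(v\in Q'_0\), a subset \(P_v\subseteq \mathcal C_v(Q')\setminus Z_{\mathrm{for}}\) obeying the local table. Condition (G3) says these can be chosen so that \(P=\bigcup_v P_v\) is acyclic; we fix such a choice.

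The key bookkeeping point is that every composable pair \((\alpha,\beta)\) with \(t(\alpha)=s(\beta)=v\) lies in exactly one local set \(\mathcal C_v(Q')\). The sets \(\mathcal C_v(Q')\) therefore partition \(\mathcal C(Q')\), which gives
\[
P\cap \mathcal C_v(Q')=P_v .
\]
Hence part (b) of Definition~\ref{def:acyclic-local-matching} holds for \(P\) at every vertex, and part (a) is exactly (G3). Moreover \(P\cap Z_{\mathrm{for}}=\varnothing\) because each \(P_v\) avoids \(Z_{\mathrm{for}}\). So \(P\) is an admissible choice in Construction~\ref{constr:gentle-admissible-completion}. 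The clause ``agrees with \(P_{\mathrm{pref}}\) as much as possible'' is only a preference among valid choices, so the construction can be completed with this \(P\). Among the acyclic local matchings avoiding \(Z_{\mathrm{for}}\), which we now know form a nonempty set, one may pick one maximizing \(|P\cap P_{\mathrm{pref}}|\).

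Finally, set \(Z'=\mathcal C(Q')\setminus P\) and \(I'=\langle Z'\rangle\). Proposition~\ref{prop:gentle-admissible-completion} then yields that \(\K Q'/I'\) is gentle. As a check, one can verify this directly through Corollary~\ref{cor:gentlecriterion}. Locally, \(Z'_v\) is the complement of \(P_v\) in \(\mathcal C_v(Q')\), and for every row of the table both \(P_v\) and its complement are matchings. In the \((2,2)\) case the complement of a perfect matching is the other perfect matching. In the \((1,2)\) and \((2,1)\) cases each set is a single pair. Globally, \(T(Q',Z')\) has exactly the arrows of \(P\), so it is acyclic and Proposition~\ref{prop:admissibility} applies.

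The only delicate step is the \((1,1)\) row. There \(P_v\) may be empty, so \(Z'_v\) is the single pair, and that is still a matching. The second delicate point is making sure the local sets really partition \(\mathcal C(Q')\), so that the global union does not disturb local sizes. Since \(Q'\) is a cluster quiver with no loops, each composable pair has a well-defined middle vertex, and this step goes through.
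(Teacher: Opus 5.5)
Your proof is correct and follows essentially the same route as the paper. You pick an acyclic local matching $P\subseteq \mathcal C(Q')\setminus Z_{\mathrm{for}}$, set $Z'=\mathcal C(Q')\setminus P$, get admissibility from acyclicity of $T(Q',Z')$, and get the permitted and forbidden continuation conditions by applying the local table to $P_v$ and its complement. The differences are minor: the paper simply says "we choose" such a $P$ and re-derives the content of Proposition~\ref{prop:gentle-admissible-completion} inline, whereas you cite that proposition and also make explicit that (G2) and (G3) actually furnish $P$, using the partition of $\mathcal C(Q')$ by middle vertex $t(\alpha)=s(\beta)$ (which holds in any quiver, so the no-loops remark is not needed).
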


\begin{proof}
By assumption, \(Q'\) is degree-gentle.  By Construction~\ref{constr:gentle-admissible-completion},
we choose an acyclic local matching
\[
  P\subseteq \mathcal C(Q')\setminus Z_{\mathrm{for}}.
\]
Then
\[
  Z'=\mathcal C(Q')\setminus P
\]
and
\[
  I'=\langle \beta\alpha\mid (\alpha,\beta)\in Z'\rangle
\]
is generated by length-two paths.  Since \(P\) is acyclic, the permitted
transition graph \(T(Q',Z')\) has no oriented cycle.  Hence
\(I'\) is admissible by the admissibility criterion.

It remains only to check the permitted and forbidden continuation conditions.
The local matching condition ensures that for each arrow \(\alpha\), there is
at most one arrow \(\beta\) such that \((\alpha,\beta)\in P\), and at most one
arrow \(\gamma\) such that \((\gamma,\alpha)\in P\).  Thus \(\alpha\) has at most
one permitted right continuation and at most one permitted left continuation.
Since \(Z'=\mathcal C(Q')\setminus P\), the same local matching condition also
implies that \(\alpha\) has at most one zero right continuation and at most one
zero left continuation.  Therefore \(\K Q'/I'\) is gentle.
\end{proof}

The following obstruction is immediate but important.

\begin{proposition}[Obstructions to gentleness]
\label{prop:obstructions-gentleness}
Let \((Q,I)\) be a gentle bound quiver and let \(Q'=\mu_k(Q)\).  Then
Construction~\ref{constr:gentle-admissible-completion} cannot produce a gentle
algebra in any of the following cases.
\begin{enumerate}[label=\textup{(O\arabic*)},leftmargin=3em]
  \item The mutated quiver \(Q'\) is not degree-gentle.
  \item At some vertex \(v\in Q'_0\), the forced zero-relations
  \(Z_{\mathrm{for}}\) do not allow a local matching.
  \item Every local matching compatible with \(Z_{\mathrm{for}}\) produces a
  permitted transition graph with an oriented cycle.
\end{enumerate}
\end{proposition}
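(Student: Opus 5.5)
The plan is to argue by contraposition, treating the three obstructions separately. In each case we assume that Construction~\ref{constr:gentle-admissible-completion} has been completed with some choice of \(P\subseteq\mathcal C(Q')\setminus Z_{\mathrm{for}}\), \(Z'=\mathcal C(Q')\setminus P\), and \(\K Q'/I'\) gentle. We then derive the negation of the relevant condition. The underlying tool is Corollary~\ref{cor:gentlecriterion}, which says that for a degree-gentle quiver with a quadratic monomial ideal, gentleness is equivalent to two things: the local matching conditions at every vertex, and acyclicity of \(T(Q',Z')\).

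For (O1), the argument is immediate. Condition (G1) in the definition of a gentle algebra is exactly the degree-gentle condition on the underlying quiver. So if \(\K Q'/I'\) were gentle for any ideal \(I'\), then \(Q'\) would be degree-gentle. Since the quiver \(Q'=\mu_k(Q)\) is fixed before any relations are chosen, no choice made in the construction can help.

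For (O2), take a vertex \(v\). The permitted set at \(v\) is
\[
P_v=P\cap\bigl(\In_{Q'}(v)\times\Out_{Q'}(v)\bigr).
\]
It is disjoint from \(Z_{\mathrm{for}}\) because \(P\) is chosen in the complement of \(Z_{\mathrm{for}}\). Gentleness together with Proposition~\ref{prop:matching} shows that both \(P_v\) and its complement \(Z'_v\) are matchings. A short check against the table following Proposition~\ref{prop:matching} then shows that \(P_v\) obeys the rule in Definition~\ref{def:acyclic-local-matching}(b):
\begin{itemize}[leftmargin=2em]
  \item in the \((1,2)\) and \((2,1)\) cases, the fact that \(Z'_v\) is a matching forces \(|P_v|=1\);
  \item in the \((2,2)\) case, the fact that both \(P_v\) and \(Z'_v\) are matchings forces \(P_v\) to be a perfect matching.
\end{itemize}
Hence \(P_v\) is a local matching compatible with \(Z_{\mathrm{for}}\) at \(v\), which contradicts (O2).

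For (O3), the previous step shows that the \(P\) produced by the construction is a local matching compatible with \(Z_{\mathrm{for}}\). Its transition graph is exactly \(T(Q',Z')\). By hypothesis (O3) this graph contains an oriented cycle, so Proposition~\ref{prop:admissibility} says \(I'\) is not admissible. That contradicts gentleness, since a gentle algebra is in particular a bound quiver algebra.

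The main obstacle is the bookkeeping in (O2). One must make sure that "gentle" forces the exact sizes listed in the table of Definition~\ref{def:acyclic-local-matching}, and not merely the "at most one" conditions. This comes from applying (G2) and (G4) together, that is, using the matching property of both \(P_v\) and its complement. The degree bounds from (O1) are used here to restrict attention to the five cases of the table.
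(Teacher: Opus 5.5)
Your proof is correct and follows the same route as the paper. (O1) fails because (G1) fails. (O2) uses the fact that gentleness forces both the permitted and the zero transitions at each vertex to be matchings. (O3) uses the admissibility test of Proposition~\ref{prop:admissibility}. Your contrapositive framing and your explicit check of the exact table sizes just make precise what the paper leaves implicit. For instance, in the $(1,2)$ case, $Z'_v$ being a matching gives $|P_v|\ge 1$, and $P_v$ being a matching gives $|P_v|\le 1$.
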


\begin{proof}
If \(Q'\) is not degree-gentle, then the first condition in the definition of a
gentle algebra already fails, independently of the choice of ideal \(I'\).  This
proves (O1).

For (O2), recall that the gentle conditions force the permitted transitions and
the zero transitions at each vertex to be matchings.  If the forced
zero-relations make such a matching impossible at some vertex, then no
completion compatible with the forced part of the construction can be gentle.

For (O3), if every compatible local matching leaves an oriented cycle in the
permitted transition graph, then the resulting ideal is not admissible.  Hence
it cannot define a gentle algebra.
\end{proof}

\subsection{Positive classes}

The following classes give useful situations in which the construction can be
completed successfully.

\begin{proposition}[Acyclic target quivers]
\label{prop:acyclic-target-gentle}
Let \((Q,I)\) be a gentle bound quiver and let \(Q'=\mu_k(Q)\).  Suppose that
\(Q'\) is acyclic and degree-gentle.  If the forced zero-relations
\(Z_{\mathrm{for}}\) are locally compatible with the matching table of
Definition~\ref{def:acyclic-local-matching}, then
Construction~\ref{constr:gentle-admissible-completion} can be completed so that
\[
  \K Q'/I'
\]
is gentle.
\end{proposition}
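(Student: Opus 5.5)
The plan is to reduce the statement to Proposition~\ref{prop:gentle-completion-criterion}. That is, I will check that \((Q,I,k)\) is gentle-completable in the sense of Definition~\ref{def:gentle-completable-mutation-datum}. Condition (G1) is the hypothesis that \(Q'\) is degree-gentle. Condition (G2) is the hypothesis that \(Z_{\mathrm{for}}\) is locally compatible with the matching table. So for every vertex \(v\in Q'_0\) I may fix a subset \(P_v\subseteq \mathcal C_v(Q')\setminus Z_{\mathrm{for}}\) obeying the table of Definition~\ref{def:acyclic-local-matching}. Where there is freedom, and in particular in the \((1,1)\)-case where size \(0\) or \(1\) is allowed, I will take \(P_v\) to agree with \(P_{\mathrm{pref}}\) as far as possible. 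This is harmless, because acyclicity will hold for every choice.

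The only remaining point is (G3): the union \(P=\bigcup_v P_v\) must be acyclic as a permitted transition graph. Here I use that \(Q'\) is acyclic. Since \(Q'_0\) is finite and \(Q'\) has no oriented cycle, I choose a topological order, namely an injective function \(h:Q'_0\to\mathbb Z\) with \(h(s(\gamma))<h(t(\gamma))\) for every arrow \(\gamma\). I then define \(\phi(\gamma)=h(t(\gamma))\) on \(Q'_1\). Any edge \(\alpha\to\beta\) of the graph determined by \(P\) satisfies \(t(\alpha)=s(\beta)\). Hence
\[
\phi(\alpha)=h(s(\beta))<h(t(\beta))=\phi(\beta),
\]
so \(\phi\) strictly increases along every edge, and the graph has no oriented cycle. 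Stated differently, an oriented cycle \(\alpha_1\to\cdots\to\alpha_r\to\alpha_1\) in the transition graph would concatenate to a closed path \(\alpha_r\cdots\alpha_1\) of positive length in \(Q'\), which is impossible. In fact this shows that the full transition graph on \(\mathcal C(Q')\) is acyclic, not only the graph of \(P\).

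Putting these together, \(P\) is an acyclic local matching contained in \(\mathcal C(Q')\setminus Z_{\mathrm{for}}\). So Construction~\ref{constr:gentle-admissible-completion} can be carried out with this \(P\), setting \(Z'=\mathcal C(Q')\setminus P\). Proposition~\ref{prop:gentle-admissible-completion}, or equivalently Proposition~\ref{prop:gentle-completion-criterion}, then gives that \(\K Q'/I'\) is gentle.

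I do not expect a real obstacle. The step needing the most care is making sure the local choices are compatible across vertices. They are, because each composable pair \((\alpha,\beta)\) lies in exactly one \(\mathcal C_v(Q')\), namely \(v=t(\alpha)\). The local sets \(P_v\) are therefore disjoint and independent, and their union satisfies the table at every vertex.
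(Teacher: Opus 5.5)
Your proposal is correct and follows essentially the same route as the paper: you check the gentle-completable conditions and invoke Proposition~\ref{prop:gentle-completion-criterion}, where (G3) is automatic because an oriented cycle in any permitted transition graph on \(Q'_1\) would concatenate to an oriented cycle in the acyclic quiver \(Q'\). Your topological-order function \(\phi\), and your remark that the local sets \(P_v\) lie in the disjoint sets \(\mathcal C_v(Q')\) and so can be chosen independently, simply make explicit details that the paper's short proof leaves implicit.
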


\begin{proof}
Since \(Q'\) is acyclic, every permitted transition graph on arrows of \(Q'\) is
acyclic: an oriented cycle in the permitted transition graph would determine an
oriented cycle in \(Q'\).  Thus, after choosing local matchings compatible with
\(Z_{\mathrm{for}}\), the global permitted transition graph is automatically
acyclic.  The result follows from Proposition~\ref{prop:gentle-completion-criterion}.
\end{proof}

\begin{corollary}[Controlled source and sink mutations]
\label{cor:controlled-source-sink}
Let \((Q,I)\) be a gentle bound quiver with \(Q\) acyclic.
\begin{enumerate}[label=\textup{(\alph*)},leftmargin=3em]
  \item Suppose that \(k\) is a source and that
  \[
    \deg_Q^+(j)\leq 1
  \]
  for every arrow \(k\to j\).  Then \(Q'=\mu_k(Q)\) is acyclic and
  degree-gentle.  If the forced zero-relations are locally compatible, then
  \(\K Q'/I'\) is gentle.

  \item Suppose that \(k\) is a sink and that
  \[
    \deg_Q^-(i)\leq 1
  \]
  for every arrow \(i\to k\).  Then \(Q'=\mu_k(Q)\) is acyclic and
  degree-gentle.  If the forced zero-relations are locally compatible, then
  \(\K Q'/I'\) is gentle.
\end{enumerate}
\end{corollary}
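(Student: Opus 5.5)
The plan is to reduce everything to Corollary~\ref{cor:source-sink} and Proposition~\ref{prop:acyclic-target-gentle}, and to treat the sink case (b) as the dual of the source case (a). So I will carry out (a) in detail and obtain (b) by reversing all arrows (passing to \(Q^{\mathrm{op}}\)). This exchanges sources and sinks, in- and out-degrees, and preserves both acyclicity and the form of the local compatibility hypothesis.

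For (a), the first step is degree-gentleness, which is immediate. Since \(k\) is a source, Corollary~\ref{cor:source-sink}(1) says exactly that \(\mu_k(Q)\) is degree-gentle if and only if \(\deg_Q^+(j)\le 1\) for every arrow \(k\to j\), and this is our hypothesis.

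The second step is acyclicity of \(Q'\). Because \(I_k=\varnothing\), there are no paths \(i\to k\to j\). Hence Step~1 of quiver mutation adds no arrows and Step~3 cancels nothing. So \(Q'\) is obtained from \(Q\) simply by reversing the arrows incident to \(k\), and \(k\) becomes a sink in \(Q'\). Suppose \(Q'\) had an oriented cycle. If the cycle passes through \(k\), it must leave \(k\), which is impossible for a sink. Otherwise the cycle uses only arrows not incident to \(k\), and these are arrows of \(Q\), contradicting the acyclicity of \(Q\). Hence \(Q'\) is acyclic.

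The third step is to invoke Proposition~\ref{prop:acyclic-target-gentle}. Now \(Q'\) is acyclic and degree-gentle, and the forced zero-relations are assumed locally compatible with the matching table of Definition~\ref{def:acyclic-local-matching}. The proposition then says that Construction~\ref{constr:gentle-admissible-completion} can be completed, so \(\K Q'/I'\) is gentle. Here \(I'\) is read as the ideal produced by that completion.

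The only potential obstacle is the acyclicity argument: one has to make sure that source mutation neither creates composite arrows \(\eta_{ij}\) nor produces a cycle through the reversed arrows. The observation that \(k\) becomes a sink handles the second point. Everything else is a direct citation of earlier results.
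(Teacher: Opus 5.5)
Your proposal is correct up to one caveat, and it follows the paper's proof essentially step for step: no composite arrows because $I_k=\varnothing$, acyclicity preserved by the reversal (your observation that $k$ becomes a sink makes this more explicit than the paper's one-line claim), degree-gentleness from the bound on the out-neighbours of $k$, then Proposition~\ref{prop:acyclic-target-gentle}, with the sink case by duality. The caveat, which the paper's own proof shares, is that Corollary~\ref{cor:source-sink} is stated for \emph{simple} quivers while this corollary does not assume simplicity, and the degree conclusion can fail without it: take a double arrow $k\rightrightarrows j$ and an arrow $j\to l$, with $I$ killing exactly one of the two paths through $j$ so that the algebra is gentle; then $\deg_Q^+(j)\le 1$ holds but $\deg_{Q'}^+(j)=3$, so you should either assume $Q$ is simple or replace the hypothesis by $\deg_Q^+(j)+\#\{\text{arrows } k\to j\}\le 2$ (and dually in the sink case).
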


\begin{proof}
If \(k\) is a source, then no new composite arrows are created by mutation at
\(k\); mutation only reverses the arrows leaving \(k\).  Since \(Q\) is acyclic,
\(Q'\) is again acyclic.  The condition \(\deg_Q^+(j)\leq 1\) for every
\(k\to j\) is precisely what prevents a neighbor \(j\) from acquiring more than
two outgoing arrows after reversal.  Thus \(Q'\) is degree-gentle.  The result
then follows from Proposition~\ref{prop:acyclic-target-gentle}.  The sink case
is dual.
\end{proof}

\begin{proposition}[Underlying graph of type \(A_n\)]
\label{prop:An-class-gentle}
Let \((Q,I)\) be a gentle bound quiver whose underlying unoriented graph is a
path of type \(A_n\).  Let \(k\in Q_0\) and let \(Q'=\mu_k(Q)\).  Then the
relation-mutation construction can be completed so that
\[
  I'=\langle Z'\rangle\subseteq \K Q'
\]
is admissible and \(\K Q'/I'\) is gentle.
\end{proposition}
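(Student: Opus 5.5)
The plan is to exploit the fact that, in type \(A_n\), every vertex has at most two neighbours. This makes the mutated quiver \(Q'=\mu_k(Q)\) of only two possible shapes, and in each shape the three conditions of Definition~\ref{def:gentle-completable-mutation-datum} can be checked by hand. The result then follows from Proposition~\ref{prop:gentle-completion-criterion}.

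\textbf{Step 1: the shape of \(Q'\).} I split according to the neighbourhood of \(k\). If \(k\) has at most one neighbour, or if \(k\) is a source or a sink, then \(I_k=\varnothing\) or \(O_k=\varnothing\). In that case mutation only reverses arrows, so \(Q'\) is again an oriented path of type \(A_n\). It is therefore acyclic, and degree-gentle because each vertex has total degree at most \(2\). The remaining case is \(i\xrightarrow{\alpha_i}k\xrightarrow{\beta_j}j\). Then \(c_i=c^j=0\), since there is no arrow between \(i\) and \(j\) in a tree. So \(Q'\) is the tree with the oriented triangle \(i\xrightarrow{\eta}j\xrightarrow{\beta_j^*}k\xrightarrow{\alpha_i^*}i\) inserted, and the rest of the path hangs off \(i\) and \(j\). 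Applying Proposition~\ref{prop:degree-criterion} with \(|O_k|=|I_k|=1\) and \(c_i=c^j=0\) shows \(Q'\) is degree-gentle. Concretely, \(i\) keeps at most one further arrow besides \(\alpha_i^*\) and \(\eta\), so it is of type \((1,1)\), \((2,1)\) or \((1,2)\), and the same holds for \(j\); the vertex \(k\) is of type \((1,1)\). Hence no vertex of \(Q'\) is of type \((2,2)\).

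\textbf{Step 2: local compatibility (condition (G2)).} Since there are no \((2,2)\) vertices, at every vertex the matching table of Definition~\ref{def:acyclic-local-matching} only asks for \(|P_v|\le 1\) at \((1,1)\) vertices, and for exactly one permitted pair at \((1,2)\) or \((2,1)\) vertices. So I must check that \(Z_{\mathrm{for}}\) never forces both pairs at a \((1,2)\) or \((2,1)\) vertex to be zero. Away from \(\{i,j,k\}\) the forced relations are inherited unchanged from the gentle ideal \(I\), which already satisfies (G2) and (G4), so this holds there. At \(i\), the only \((2,1)\) or \((1,2)\) configuration consists of the other arrow \(\rho\) at \(i\) together with \(\eta\) and \(\alpha_i^*\). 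Rule (c) forces at most the one pair involving \(\rho\) and \(\eta\), according to whether \(\alpha_i\rho\in I\). The pair involving \(\alpha_i^*\) is a new path through the neighbourhood of \(k\) and is not forced, so the other pair remains available. The vertex \(j\) is symmetric. In the acyclic case the same bookkeeping applies, with the reversed arrow \(\alpha^*\) or \(\beta^*\) playing the role of the unforced new arrow.

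\textbf{Step 3: acyclicity (condition (G3)), the main obstacle.} In the acyclic case this is automatic by Proposition~\ref{prop:acyclic-target-gentle}. In the triangle case, any oriented cycle in \(T(Q',Z')\) projects to an oriented cycle in \(Q'\). Since the underlying graph of \(Q'\) is a tree plus one triangle, that cycle must wind around the triangle, and so it uses all three transitions \((\eta,\beta_j^*)\), \((\beta_j^*,\alpha_i^*)\) and \((\alpha_i^*,\eta)\). My plan is to break the cycle at \(k\) by taking \(P_k=\varnothing\). This is always legal because \(k\) is a \((1,1)\) vertex, and it never conflicts with \(Z_{\mathrm{for}}\) because it only adds zero relations. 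The cost is that when \(\beta_j\alpha_i\in I\) the complement-transpose preference of rule (d) is not honoured, which is permitted since Construction~\ref{constr:gentle-admissible-completion} only asks to agree with \(P_{\mathrm{pref}}\) ``as much as possible''. If one insists on honouring that preference, the fallback is to break the triangle at \(i\) or \(j\) instead. There the delicate point is that the local choice forced by Step 2 might keep the triangle transition, and this is where I expect the real case analysis to lie. With \(P_k=\varnothing\) and the local choices of Step 2, \(P\) is an acyclic local matching, and Proposition~\ref{prop:gentle-completion-criterion} gives that \(I'\) is admissible and \(\K Q'/I'\) is gentle.
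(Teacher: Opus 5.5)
Your proposal is correct and follows essentially the paper's route. The endpoint, source and sink cases give an acyclic target and are handled by Proposition~\ref{prop:acyclic-target-gentle}; in the case \(i\to k\to j\), the only possible permitted cycle is the triangle, which is broken before invoking Proposition~\ref{prop:gentle-completion-criterion}. Your version is sharper than the paper's in two ways: you check the local compatibility of \(Z_{\mathrm{for}}\) at \(i\) and \(j\), which the paper leaves implicit, and you break the triangle specifically at \(k\) via \(P_k=\varnothing\). That break is always available, whereas a break at \(i\) or \(j\) can be blocked when \(\alpha_i\rho\in I\) or \(\tau\beta_j\in I\) forces the triangle transition at that vertex.
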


\begin{proof}
Since the underlying graph is a path, the mutation at \(k\) is local and has no
branching.  If \(k\) is an endpoint, then mutation only reverses the unique
arrow incident with \(k\), so the mutated quiver is again acyclic and
path-shaped.  If \(k\) is an interior source or sink, mutation again only
reverses the two arrows incident with \(k\), so the target quiver is acyclic.
In these cases the claim follows from Proposition~\ref{prop:acyclic-target-gentle}.

It remains to consider the case where \(k\) has one incoming and one outgoing
arrow,
\[
  i\xrightarrow{\alpha} k\xrightarrow{\beta} j.
\]
Then mutation creates a single composite arrow
\[
  \eta:i\to j
\]
and reverses the two arrows incident with \(k\):
\[
  \alpha^*:k\to i,
  \qquad
  \beta^*:j\to k.
\]
Thus the only oriented cycle created by the mutation is the triangle

\begin{equation}\label{}
\nonumber   \xymatrix{
\cdot_{i} \ar[r]^{\eta} & \ar[dl]^{\beta^*}\cdot_{j}   \\
\cdot_{k}\ar[u]^{\alpha^*}}.
  \end{equation}

Because there is no branching, each affected vertex has at most two incoming
and at most two outgoing arrows.  Hence \(Q'\) is degree-gentle.

We now choose the local matching so that at least one consecutive pair on this
triangle is declared zero.  This breaks the only possible permitted oriented
cycle.  The remaining local choices can be made according to the local matching
table, because each affected vertex has either a \((1,1)\)-, \((1,2)\)-, or
\((2,1)\)-configuration.  Therefore the resulting permitted transition graph is
acyclic, and Proposition~\ref{prop:gentle-completion-criterion} applies.
\end{proof}

\begin{example}[A type \(A_3\) mutation]
\label{ex:A3-positive}
Let
\[
  Q:
  \qquad
  1\xrightarrow{\alpha}2\xrightarrow{\beta}3
\]
and let
\[
  I=\langle \beta\alpha\rangle.
\]
Then \((Q,I)\) is gentle.  Mutate at \(k=2\).  The mutated quiver has arrows
\[
  \alpha^*:2\to 1,
  \qquad
  \eta:1\to 3,
  \qquad
  \beta^*:3\to 2,
\]
so \(Q'=\mu_2(Q)\) is the oriented triangle

\begin{equation}\label{}
\nonumber   \xymatrix{
\cdot_{3} \ar[d]_{\beta^*} & \ar[l]_{\eta}\cdot_{1} \\
	\cdot_{2}  \ar[ur]_{\alpha^*}}.
  \end{equation}

Since the initial path \(\beta\alpha\) was zero, the complement-transpose rule
allows the reversed path
\[
  3\xrightarrow{\beta^*}2\xrightarrow{\alpha^*}1
\]
to remain permitted.  To make the ideal admissible and gentle, we break the
triangle by declaring, for instance,
\[
  \eta\alpha^*=0.
\]
Thus we may take
\[
  I'=\langle \eta\alpha^*\rangle\subseteq \K Q'.
\]
The permitted transition graph is the directed path
\[
  \eta\longrightarrow \beta^*\longrightarrow \alpha^*,
\]
together with no additional cycle.  Hence \(I'\) is admissible, and
\(\K Q'/I'\) is gentle.
\end{example}

\begin{proposition}[Cycle-breaking degree-gentle mutations]
\label{prop:cycle-breaking-gentle}
Let \((Q,I)\) be a gentle bound quiver and let \(Q'=\mu_k(Q)\).  Suppose that
\(Q'\) is degree-gentle and that the forced zero-relations are locally
compatible.  If the local matching choices can be made so that every oriented
cycle in \(Q'\) contains a consecutive length-two subpath belonging to \(Z'\),
then \(\K Q'/I'\) is gentle.
\end{proposition}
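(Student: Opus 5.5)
The plan is to reduce the statement to Proposition~\ref{prop:gentle-completion-criterion}. By hypothesis, condition (G1) of Definition~\ref{def:gentle-completable-mutation-datum} holds because \(Q'\) is degree-gentle. Condition (G2) holds because the forced zero-relations are locally compatible: at every vertex \(v\in Q'_0\) we may fix a local choice
\[
P_v\subseteq \mathcal C_v(Q')\setminus Z_{\mathrm{for}}
\]
obeying the matching table of Definition~\ref{def:acyclic-local-matching}. We take these to be the local choices supplied by the hypothesis, namely those for which every oriented cycle of \(Q'\) contains a consecutive pair in \(Z'=\mathcal C(Q')\setminus P\), where \(P=\bigcup_v P_v\). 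It then remains only to verify (G3), that is, that \(P\) is acyclic as a permitted transition graph. Once this is done, Proposition~\ref{prop:gentle-completion-criterion} (equivalently Corollary~\ref{cor:gentlecriterion}) gives that \(\K Q'/I'\) is gentle.

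The key step is the acyclicity of \(T(Q',Z')\). Suppose, for contradiction, that \(T(Q',Z')\) contains an oriented cycle
\[
\gamma_1\to\gamma_2\to\cdots\to\gamma_r\to\gamma_1 .
\]
By the definition of the permitted transition graph, each \(\gamma_{s+1}\gamma_s\) is a composable length-two path with \((\gamma_s,\gamma_{s+1})\in P\), and likewise for \((\gamma_r,\gamma_1)\). Hence \(\gamma_r\cdots\gamma_1\) is a closed walk in \(Q'\) that returns to its start with \(t(\gamma_r)=s(\gamma_1)\), and all of its consecutive length-two subpaths, including the wrap-around one, are permitted.

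The main obstacle is that this closed walk need not be a simple oriented cycle, since it may revisit vertices. The hypothesis, however, is phrased only for oriented cycles of \(Q'\). I would first read ``oriented cycle'' as ``closed oriented path'' and apply the hypothesis directly, which would give the required contradiction. If a simple-cycle reading is intended, I would instead extract a simple oriented cycle from the walk, by taking a minimal segment between two visits of the same vertex. The danger is that the extracted cycle's closing transition is not a transition of the original walk, so the \(Z'\)-pair guaranteed on it could sit exactly at that junction.

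To handle this I would argue via the local matching. At each vertex, \(P_v\) is a partial matching, so each arrow has at most one permitted successor. A walk in \(T(Q',Z')\) is therefore determined by its first arrow, and any cycle in \(T(Q',Z')\) is a periodic orbit of this partial successor map. Choosing a minimal period, the arrows \(\gamma_1,\dots,\gamma_r\) are distinct, and the walk in \(Q'\) is a closed path without repeated arrows. I would then use this closed path, with the convention that cycles are closed oriented paths, as the oriented cycle to which the hypothesis applies. This yields a consecutive pair in \(Z'\), contradicting that every transition of the walk lies in \(P\). Hence \(P\) is acyclic, condition (G3) holds, and the conclusion follows from Proposition~\ref{prop:gentle-completion-criterion}.
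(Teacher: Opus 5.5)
Your reduction is correct and is the paper's own argument: the paper observes that the hypothesis says no oriented cycle of $Q'$ is fully permitted, i.e.\ $T(Q',Z')$ is acyclic, and then invokes Proposition~\ref{prop:gentle-completion-criterion}. Your local-matching detour is unnecessary and does not resolve your simple-cycle worry, because a minimal period gives distinct arrows but not distinct vertices; indeed the simple-cycle reading genuinely fails, for example for two oriented triangles sharing a $(2,2)$-vertex with the crossing matching there, so what carries the argument is the closed-oriented-path reading of ``oriented cycle'', which is standard for quivers and implicit in the paper's ``equivalently''.
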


\begin{proof}
The condition says exactly that no oriented cycle of \(Q'\) is fully permitted.
Equivalently, the permitted transition graph \(T(Q',Z')\) has no oriented
cycle.  Together with degree-gentleness and the local matching condition, this
is precisely the hypothesis of Proposition~\ref{prop:gentle-completion-criterion}.
\end{proof}

\subsection{Dynkin and finite mutation classes}

The preceding positive results should not be interpreted as saying that finite
representation type, Dynkin type, finite cluster type, or finite mutation type is
sufficient by itself.  Those are mutation-theoretic or representation-theoretic
properties of the quiver, whereas gentleness of \(\K Q'/I'\) depends on three
local pieces of data:
\[
  \text{degree-gentleness of }Q',
  \qquad
  \text{local compatibility of }Z_{\mathrm{for}},
  \qquad
  \text{acyclicity of the permitted transitions.}
\]

For ordinary simply-laced Dynkin quivers, the path case \(A_n\) is covered by
Proposition~\ref{prop:An-class-gentle}.  However, Dynkin types with branching,
such as \(D_n\) and \(E_n\), are not automatically preserved by relation
mutation.  Mutation at or near a branching vertex may destroy degree-gentleness,
and even when degree-gentleness survives, the forced relations may make the
local matching condition impossible.

\begin{example}[Dynkin type \(D_4\): failure of degree-gentleness]
\label{ex:D4-degree-failure}
Let \(Q\) be the quiver

\begin{equation}\label{}
\nonumber   \xymatrix{
\cdot_{1} \ar[r]^{\alpha} & \ar[r]^{\beta}\cdot_{2}\ar[d]^{\gamma} & \cdot_{3}  \\
&\cdot_{4}&}.
  \end{equation}

and let
\[
  I=\langle \beta\alpha\rangle.
\]
Then \((Q,I)\) is gentle.  Indeed, vertex \(2\) has one incoming arrow and two
outgoing arrows, and among the two length-two paths through \(2\), one is zero
and one is permitted:
\[
  \beta\alpha\in I,
  \qquad
  \gamma\alpha\notin I.
\]

The underlying unoriented graph is Dynkin type \(D_4\).  Now mutate at the
source vertex \(k=1\).  Since \(1\) is a source, mutation creates no new
composite arrows; it only reverses the arrow \(1\to 2\).  Hence
\(Q'=\mu_1(Q)\) is
\[
  2\to 1,
  \qquad
  2\to 3,
  \qquad
  2\to 4.
\]
Thus vertex \(2\) has three outgoing arrows in \(Q'\).  Therefore \(Q'\) is not
degree-gentle.  Consequently, no choice of ideal \(I'\subseteq \K Q'\) can make
\(\K Q'/I'\) gentle.  This shows that Dynkin type, and hence finite cluster type
in this example, is not sufficient to guarantee success of the relation-mutation
procedure.
\end{example}

\begin{example}[Dynkin type \(D_5\): failure of local compatibility]
\label{ex:D5-local-failure}
Let \(Q\) be the quiver
\[
  0\xrightarrow{\rho}1\xrightarrow{\alpha}2,
  \qquad
  2\xrightarrow{\beta}3,
  \qquad
  2\xrightarrow{\gamma}4,
\]
and set
\[
  I=\langle \alpha\rho,\ \beta\alpha\rangle.
\]
Then \((Q,I)\) is gentle.  The zero-relation \(\alpha\rho=0\) occurs through
vertex \(1\), while at vertex \(2\) the two continuations of \(\alpha\) satisfy
\[
  \beta\alpha\in I,
  \qquad
  \gamma\alpha\notin I.
\]
Thus each arrow has at most one permitted and at most one forbidden
continuation on either side.

Mutate at \(k=2\).  The paths
\[
  1\xrightarrow{\alpha}2\xrightarrow{\beta}3,
  \qquad
  1\xrightarrow{\alpha}2\xrightarrow{\gamma}4
\]
produce new composite arrows
\[
  \eta:1\to 3,
  \qquad
  \xi:1\to 4.
\]
The arrows incident with \(2\) are reversed:
\[
  \alpha^*:2\to 1,
  \qquad
  \beta^*:3\to 2,
  \qquad
  \gamma^*:4\to 2.
\]
Thus \(Q'=\mu_2(Q)\) is degree-gentle.  For example, vertex \(1\) has two
incoming arrows, \(\rho\) and \(\alpha^*\), and two outgoing arrows, \(\eta\)
and \(\xi\).

However, the forced relation-transport step creates an obstruction.  Since
\[
  \alpha\rho\in I,
\]
the transport rule forces both
\[
  \eta\rho\in I'
  \qquad\text{and}\qquad
  \xi\rho\in I'.
\]
Thus, at vertex \(1\), the incoming arrow \(\rho\) has two forced zero right
continuations:
\[
  \rho\longrightarrow \eta,
  \qquad
  \rho\longrightarrow \xi.
\]
This violates the local matching condition.  Equivalently, in the
\((2,2)\)-configuration at vertex \(1\), any permitted local matching would need
to choose one permitted continuation starting at \(\rho\), but both possible
continuations from \(\rho\) are already forced to be zero.

Therefore the forced zero-relations are not locally compatible with gentleness.
Although the admissibility-first construction can still produce an admissible
quadratic monomial ideal by killing enough length-two paths, it cannot produce
a gentle algebra compatible with these forced transported relations.
\end{example}

\begin{remark}[Finite mutation type is not enough]
\label{rem:finite-mutation-not-enough}
Finite mutation type is a property of the quiver mutation class.  It does not
record the admissible ideal or the local permitted and forbidden continuations.
Thus it cannot by itself guarantee that relation mutation produces a gentle
algebra.  The examples above already show this: Dynkin quivers are of finite
cluster type, hence of finite mutation type, but the relation-mutation procedure
may fail either because the mutated quiver is not degree-gentle
(Example~\ref{ex:D4-degree-failure}) or because the forced zero-relations cannot
be completed to a local matching (Example~\ref{ex:D5-local-failure}).
\end{remark}

\subsection{A practical classification summary}

The preceding discussion can be summarized as follows.

\begin{theorem}[Practical success and failure criteria]
\label{thm:practical-success-failure}
Let \((Q,I)\) be a gentle bound quiver and let \(Q'=\mu_k(Q)\).  Let
\(Z_{\mathrm{for}}\) be the forced zero-relation set determined by the
relation-mutation construction.

\begin{enumerate}[label=\textup{(\arabic*)},leftmargin=3em]
  \item If \(Q'\) is acyclic, degree-gentle, and \(Z_{\mathrm{for}}\) is locally
  compatible with a matching, then the construction produces a gentle algebra
  \(\K Q'/I'\).

  \item If the underlying unoriented graph of \(Q\) is a path of type \(A_n\),
  then the construction can be completed so that \(\K Q'/I'\) is gentle.

  \item More generally, if \(Q'\) is degree-gentle and the forced
  zero-relations admit an acyclic local matching, then the construction
  produces a gentle algebra \(\K Q'/I'\).

  \item If \(Q'\) is not degree-gentle, then no ideal \(I'\subseteq \K Q'\) can
  make \(\K Q'/I'\) gentle.

  \item If the forced zero-relations do not admit a local matching at some
  vertex, then the construction cannot produce a gentle algebra compatible
  with those forced relations.

  \item Dynkin type, finite cluster type, and finite mutation type do not by
  themselves guarantee success.  They must be supplemented by the local degree,
  local matching, and acyclicity conditions above.
\end{enumerate}
\end{theorem}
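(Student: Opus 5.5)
The theorem is a summary, so I will prove it item by item, reducing each item to a result already established in Section~\ref{sec:classes-gentle-relation-mutation}.

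\emph{Item (1).} This is Proposition~\ref{prop:acyclic-target-gentle}. Its proof needs one observation: an oriented cycle $\alpha_1\to\alpha_2\to\cdots\to\alpha_r\to\alpha_1$ in $T(Q',Z')$ gives composable arrows of $Q'$, so the product $\alpha_r\cdots\alpha_1$ is an oriented cycle in $Q'$. Hence any locally compatible matching is automatically acyclic, and Proposition~\ref{prop:gentle-completion-criterion} finishes the argument.

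\emph{Item (3).} This is Proposition~\ref{prop:gentle-completion-criterion} combined with Proposition~\ref{prop:gentle-admissible-completion}. An acyclic local matching $P\subseteq\mathcal C(Q')\setminus Z_{\mathrm{for}}$ is exactly the datum that Construction~\ref{constr:gentle-admissible-completion} requires. With $Z'=\mathcal C(Q')\setminus P$:
\begin{itemize}
\item Corollary~\ref{cor:gentlecriterion} supplies the matching conditions;
\item Proposition~\ref{prop:admissibility} supplies admissibility;
\item degree-gentleness of $Q'$ gives (G1).
\end{itemize}

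\emph{Item (4).} Condition (G1) depends only on the quiver, so it fails for every $I'$. This is (O1) of Proposition~\ref{prop:obstructions-gentleness}.

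\emph{Item (5).} This is (O2). Any gentle $I'$ compatible with $Z_{\mathrm{for}}$ must have $P_v\cap Z_{\mathrm{for}}=\varnothing$. By Proposition~\ref{prop:matching}, $P_v$ must also satisfy the table of Definition~\ref{def:acyclic-local-matching}, which is impossible by hypothesis.

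\emph{Item (6).} Examples~\ref{ex:D4-degree-failure} and~\ref{ex:D5-local-failure} serve as counterexamples, as explained in Remark~\ref{rem:finite-mutation-not-enough}. Both quivers are Dynkin of type $D$, hence of finite cluster type and finite mutation type, yet the construction fails: in the first by (4), in the second by (5).

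\emph{Item (2).} I expect this to be the main obstacle, because Proposition~\ref{prop:An-class-gentle} argues quickly. I would redo its case analysis carefully.

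If $k$ is a source, sink, or endpoint, then $Q'$ is again an $A_n$ path and so acyclic. Each neighbour of $k$ has total degree at most $2$ in a path, so degree-gentleness holds automatically. It remains to show local compatibility of $Z_{\mathrm{for}}$. Vertices of a path have configuration at most $(1,1)$, $(2,0)$ or $(0,2)$, and the transport rules (a)--(d) can force at most one zero at each. So a compatible matching exists, and item (1) applies.

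In the interior case $i\to k\to j$, the vertices $i$ and $j$ each acquire at most one extra arrow. I would check the configurations directly: $k$ is $(1,1)$, and $i$ and $j$ are at most $(2,1)$ or $(1,2)$. Hence $Q'$ is degree-gentle and the only oriented cycle is the triangle $i\to j\to k\to i$.

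At vertex $k$, rule (d) determines the status of $(\beta^*,\alpha^*)$. At $i$ and $j$, the $(1,2)$ and $(2,1)$ rows each demand exactly one permitted pair, and rule (c) forces at most one zero there, because in the path $Q$ the arrow $\alpha$ has at most one predecessor. So a local choice exists. To make it acyclic, I would choose at $i$ or $j$ the permitted pair that does not lie on the triangle whenever possible. Failing that, I would note that the pair at $k$ is zero exactly when $\beta\alpha\notin I$. If $\beta\alpha\in I$, then rule (c) leaves a free choice at one of $i$, $j$, which can break the triangle.

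The delicate point is when both $i$ and $j$ have only triangle pairs available. In that case their configuration is $(1,1)$, which permits $P_v=\varnothing$, and I would choose the empty set there to break the cycle. Proposition~\ref{prop:cycle-breaking-gentle} then concludes.
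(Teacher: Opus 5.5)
Your reductions for items (1) and (3)--(6) are the same as the paper's, which proves the theorem simply by citing Propositions~\ref{prop:acyclic-target-gentle}, \ref{prop:An-class-gentle}, \ref{prop:gentle-completion-criterion}, \ref{prop:obstructions-gentleness} and the two type-$D$ examples. The gap is in your re-derivation of item (2), in the interior case $i\xrightarrow{\alpha}k\xrightarrow{\beta}j$. Two of your claims there are false. The first is that when $\beta\alpha\in I$, rule (c) leaves a free choice at $i$ or at $j$. The second is that if both $i$ and $j$ admit only triangle pairs, then both are $(1,1)$-vertices.

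For a counterexample, take the gentle bound quiver $h\xrightarrow{\rho}i\xrightarrow{\alpha}k\xrightarrow{\beta}j\xrightarrow{\tau}l$ with $I=\langle \alpha\rho,\beta\alpha,\tau\beta\rangle$. In $Q'=\mu_k(Q)$, vertex $i$ has configuration $(2,1)$ and $(\rho,\eta)\in Z_{\mathrm{for}}$, so the table forces $P_i=\{(\alpha^*,\eta)\}$. Vertex $j$ has configuration $(1,2)$ and $(\eta,\tau)\in Z_{\mathrm{for}}$, so $P_j=\{(\eta,\beta^*)\}$ is forced. You treat rule (d) as determining the status of $(\beta^*,\alpha^*)$, so you keep that pair permitted here. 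Then $\alpha^*\to\eta\to\beta^*\to\alpha^*$ is an oriented cycle in $T(Q',Z')$, and your $I'$ is not admissible.

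The missing point is that rule (d) forces a zero only when $\beta\alpha\notin I$. When $\beta\alpha\in I$, the pair $(\beta^*,\alpha^*)$ is merely preferred, and Construction~\ref{constr:gentle-admissible-completion} asks $P$ to agree with $P_{\mathrm{pref}}$ only ``as much as possible''. Since $k$ is a $(1,1)$-vertex of $Q'$, the choice $P_k=\varnothing$ (that is, $\alpha^*\beta^*\in I'$) is always allowed by the table and never conflicts with $Z_{\mathrm{for}}$. So you can always break the triangle at $k$. The choices at $i$ and $j$ then only need local compatibility, which you did establish: there is at most one forced zero at each. This is what the paper's proof of Proposition~\ref{prop:An-class-gentle} amounts to when it declares some pair on the triangle zero. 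With this repair, your case analysis goes through.
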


\begin{proof}
Parts (1)--(3) follow from Propositions~\ref{prop:acyclic-target-gentle},
\ref{prop:An-class-gentle}, and \ref{prop:gentle-completion-criterion}.  Parts
(4) and (5) follow from Proposition~\ref{prop:obstructions-gentleness}.  Part
(6) follows from Examples~\ref{ex:D4-degree-failure} and
\ref{ex:D5-local-failure}.
\end{proof}

\section{Possible relations between the initial and mutated gentle algebras}
\label{sec:comparison-mutated-algebras}

Let \((Q,I)\) be a gentle bound quiver and let
\[
  Q'=\mu_k(Q)
\]
be the quiver obtained by mutating \(Q\) at the vertex \(k\).  Suppose that
\(Q'\) is degree-gentle.  Let \(Z'\) be the set of quadratic zero-relations
constructed by 
Construction ~\ref{constr:gentle-admissible-completion}, and set
\[
  I'=\langle Z'\rangle\subseteq \K Q'.
\]
Assume further that the local matching condition and the permitted-cycle
condition hold, so that \(\K Q'/I'\) is again gentle by
Proposition ~\ref{prop:gentle-completion-criterion}.  In this section we clarify what kind of
relationship one should expect between
\[
  A=\K Q/I
  \qquad\text{and}\qquad
  B=\K Q'/I'.
\]

The main point is that the constructions in this paper are mutation
constructions, not isomorphism constructions.  They provide a way to transport
or reconstruct quadratic zero-relations after quiver mutation, but they do not
imply that the original and mutated algebras are isomorphic.  In general, they
need not even be derived equivalent.

\subsection{Isomorphism is exceptional}

The strongest possible relation between \(A\) and \(B\) is an algebra
isomorphism.  A natural sufficient condition for this is an isomorphism of
bound quivers.  Namely, suppose that there is a quiver isomorphism
\[
  \sigma:Q\longrightarrow Q'
\]
such that the induced path-algebra isomorphism
\[
  \K Q\longrightarrow \K Q'
\]
sends \(I\) onto \(I'\).  Equivalently, in the quadratic monomial case,
\[
  \beta\alpha\in I
  \quad\Longleftrightarrow\quad
  \sigma(\beta)\sigma(\alpha)\in I'
\]
for every composable pair of arrows \(\alpha,\beta\) in \(Q\).  Then
\[
  \K Q/I\cong \K Q'/I'.
\]

Conversely, under the usual basic presentation assumptions, the Gabriel quiver
of a finite-dimensional basic algebra is recovered from the algebra.  Thus, if
the directed quivers \(Q\) and \(Q'\) are not isomorphic, then one should not
expect \(\K Q/I\) and \(\K Q'/I'\) to be isomorphic.  Therefore, in the present
setting, isomorphism should be regarded as exceptional:
\[
  \boxed{
  \K Q/I \cong \K Q'/I'
  \quad\text{only in special cases where}\quad
  (Q,I)\cong (Q',I')
  \text{ as bound quivers.}
  }
\]

There is also an opposite-algebra possibility.  If
\[
  (Q',I')\cong (Q^{\operatorname{op}},I^{\operatorname{op}}),
\]
then
\[
  \K Q'/I'\cong (\K Q/I)^{\operatorname{op}}.
\]
This is weaker than being isomorphic to \(\K Q/I\) itself, unless the algebra
is isomorphic to its opposite.

\begin{example}[A mutation-related pair that is not isomorphic]
Let
\[
  Q:
  \qquad
  1\xrightarrow{\alpha}2\xrightarrow{\beta}3,
  \qquad
  I=0.
\]
Then \(\K Q/I\) is gentle. Mutating at \(k=2\) produces the quiver described in
Example~\ref{ex:pathzero} below. Since the original and mutated bound quivers
are not isomorphic, their path algebras are not isomorphic either.
\end{example}

\subsection{The mutation relation provided by the construction}

Since isomorphism is too strong, the relation directly provided by
Construction~\ref{constr:gentle-admissible-completion} is a mutation relation
between bound quivers.

\begin{definition}
Let \((Q,I)\) and \((Q',I')\) be gentle bound quivers. We say that they are
\emph{elementarily gentle-mutation-related at \(k\)} if
\[
  Q'=\mu_k(Q)
\]
and \(I'\) is obtained from \(I\) by Construction~\ref{constr:gentle-admissible-completion},
with the local matching and admissibility conditions satisfied. We write
\[
  (Q,I)\sim_{\mathrm{gmut},k}(Q',I').
\]
The equivalence relation generated by elementary gentle mutations will be
denoted by
\[
  \sim_{\mathrm{gmut}}.
\]
\end{definition}

Thus, whenever the hypotheses of Proposition~\ref{prop:gentle-completion-criterion} are satisfied,
the construction guarantees
\[
  (Q,I)\sim_{\mathrm{gmut},k}(Q',I').
\]
This is a combinatorial and local relation. It records that \(B\) is obtained
from \(A\) by mutating the underlying quiver and by choosing compatible
quadratic zero-relations on the mutated quiver.

At the cluster level, \(Q\) and \(Q'\) lie in the same mutation class. Since
rooted mutation loops are defined relative to a chosen initial seed, the
rooted-mutation-group discussion should be read as a comparison inside that
same mutation class rather than as a new invariant attached to the bound
quiver. For a detailed account of rooted mutation groups, see
\cite{SalehRootedMutationGroups}.

\subsection{Derived equivalence: possible but not automatic}

A more flexible relation than algebra isomorphism is derived equivalence:
\[
  D^b(\K Q/I)\simeq D^b(\K Q'/I').
\]
Construction~\ref{constr:gentle-admissible-completion} does not automatically imply such
an equivalence. Nevertheless, derived equivalence is a natural relation to
look for when the local relation mutation agrees with a tilting mutation of the
algebra.

Let \(A=\K Q/I\). Denote by \(T_k^-(A)\) and \(T_k^+(A)\) the standard
negative and positive algebra-mutation complexes at the vertex \(k\). If one
of these complexes is a tilting complex and
\[
  B=\K Q'/I'
  \cong
  \operatorname{End}_{D^b(A)}\bigl(T_k^\pm(A)\bigr),
\]
then Rickard's Morita theory for derived categories gives
\[
  D^b(A)\simeq D^b(B).
\]
In this case, Construction~\ref{constr:gentle-admissible-completion} is not merely a
combinatorial mutation of relations; it coincides with a genuine algebra
mutation in the sense of tilting theory \cite{LadkaniMutation}.

\begin{proposition}[Derived equivalence criterion]
Let \((Q,I)\) be gentle, let \(Q'=\mu_k(Q)\), and let \(I'\) be obtained from
Construction~\ref{constr:gentle-admissible-completion}. Set
\[
  A=\K Q/I,
  \qquad
  B=\K Q'/I'.
\]
If
\[
  B\cong \operatorname{End}_{D^b(A)}\bigl(T_k^\pm(A)\bigr)
\]
for a valid positive or negative algebra-mutation complex \(T_k^\pm(A)\), then
\[
  D^b(A)\simeq D^b(B).
\]
\end{proposition}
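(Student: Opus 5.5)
The plan is to reduce the statement to Rickard's Morita theorem for derived categories. The hypothesis already says that $T=T_k^\pm(A)$ is a valid algebra-mutation complex. By "valid" I mean the standard meaning in \cite{LadkaniMutation}: $T$ is a tilting complex in $K^b(\operatorname{proj}A)$. So the first step is to make this explicit. Recall that $T_k^-(A)=\bigoplus_{i\neq k}P_i\oplus T_k$, where $T_k$ is the two-term complex $P_k\to\bigoplus_{j}P_j$ built from the minimal left $\operatorname{add}(\bigoplus_{i\neq k}P_i)$-approximation of $P_k$. The complex $T_k^+(A)$ is defined dually. Both are bounded complexes of finitely generated projective $A$-modules, so $T\in K^b(\operatorname{proj}A)$.

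Next I would record the two tilting conditions that validity provides.

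\begin{itemize}
\item Self-orthogonality: $\operatorname{Hom}_{K^b(\operatorname{proj}A)}(T,T[n])=0$ for all $n\neq 0$.
\item Generation: $\operatorname{add}T$ generates $K^b(\operatorname{proj}A)$ as a triangulated category.
\end{itemize}

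Generation can also be seen directly. The summands $P_i$ with $i\neq k$ lie in $\operatorname{add}T$. The approximation triangle $P_k\to\bigoplus_j P_j\to T_k\to P_k[1]$ then exhibits $P_k$ in the thick subcategory generated by $T$. Self-orthogonality is the substantive part of validity, and I would take it as given by hypothesis rather than reprove it. In general it can fail, which is why the statement assumes it.

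Then I would apply Rickard's theorem. Since $A=\K Q/I$ is finite-dimensional (by admissibility of $I$) and $T$ is a tilting complex, there is a triangle equivalence
\[
D^b(\operatorname{mod}A)\simeq D^b\bigl(\operatorname{mod}\operatorname{End}_{D^b(A)}(T)\bigr).
\]
Here I use that $\operatorname{Hom}$ in $K^b(\operatorname{proj}A)$ agrees with $\operatorname{Hom}$ in $D^b(A)$. Finally, the hypothesis $B\cong\operatorname{End}_{D^b(A)}(T)$ induces an equivalence $\operatorname{mod}B\simeq\operatorname{mod}\operatorname{End}(T)$, hence of the bounded derived categories. Composing the two equivalences gives $D^b(A)\simeq D^b(B)$.

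The only delicate point is the bookkeeping: "valid" must be read precisely as "tilting", and the endomorphism ring must be taken in $K^b(\operatorname{proj}A)$ (equivalently $D^b(A)$) with the conventions matching the isomorphism with $B$. A contravariant convention would give $B^{\mathrm{op}}$. That is harmless, because $D^b(B^{\mathrm{op}})\simeq D^b(B)^{\mathrm{op}}$ via duality and Rickard's theorem is symmetric under passing to opposites. Still, I would fix the convention at the outset so that the final composite equivalence is covariant. None of this uses the gentle structure or Construction~\ref{constr:gentle-admissible-completion}; the gentle hypothesis only guarantees that $A$ and $B$ are finite-dimensional basic algebras, so the theorem applies.
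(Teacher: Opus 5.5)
Your proposal is correct and follows the paper's argument: read ``valid'' as $T_k^\pm(A)$ being a tilting complex, apply Rickard's theorem to get $D^b(A)\simeq D^b\bigl(\operatorname{End}_{D^b(A)}(T_k^\pm(A))\bigr)$, and transport this along the assumed isomorphism with $B$. One side remark needs correcting: a mismatched (contravariant) convention is not harmless, because it would only give $D^b(A)\simeq D^b(B^{\mathrm{op}})$, i.e.\ a derived equivalence between $A^{\mathrm{op}}$ and $B$; fixing the convention at the outset, as you propose, is therefore genuinely required.
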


\begin{proof}
If \(T_k^\pm(A)\) is a tilting complex, then its endomorphism algebra in
\(D^b(A)\) is derived equivalent to \(A\). Since this endomorphism algebra is
assumed to be isomorphic to \(B\), the conclusion follows.
\end{proof}

\begin{remark}
The proposition gives a sufficient condition, not a necessary one. In
practice, one may first construct \(I'\) by the local matching rule and then
compare \(\K Q'/I'\) with the endomorphism algebra produced by a positive or
negative algebra mutation. If the two agree, the mutated gentle algebra is
derived equivalent to the original one.
\end{remark}

\subsection{Derived obstructions}

Derived equivalence can fail even when both algebras are gentle. For gentle
algebras, the Avella-Alaminos--Geiss invariant is a derived invariant. Hence,
if
\[
  \phi_{\K Q/I}\neq \phi_{\K Q'/I'},
\]
where \(\phi\) denotes the Avella-Alaminos--Geiss invariant, then the two
gentle algebras are not derived equivalent.

The Cartan determinant gives another quick obstruction. If \(C_A\) and
\(C_B\) are the Cartan matrices of \(A\) and \(B\), derived equivalence implies
\[
  |\det C_A|=|\det C_B|.
\]
Thus a difference in Cartan determinants also rules out derived equivalence.

In particular, the number of arrows by itself should not be used as a derived
invariant; one should instead compare invariants such as \(\phi\) or the Cartan
determinant.

\begin{example}[Derived obstruction by invariants]
Whenever the computed Avella-Alaminos--Geiss invariant or the Cartan determinant
differs between two gentle algebras, the algebras cannot be derived equivalent.
This is the correct way to rule out derived equivalence in this setting.
\end{example}

\begin{example}[Not isomorphic but derived equivalent]
Let
\[
  Q:
  \qquad
  2\longleftarrow 1\longrightarrow 3,
  \qquad
  I=0.
\]
Mutate at the source \(k=1\). Since \(1\) is a source, no composite arrows are
created; the two arrows are simply reversed. Hence
\[
  Q'=\mu_1(Q):
  \qquad
  2\longrightarrow 1\longleftarrow 3,
  \qquad
  I'=0.
\]
The two path algebras are not isomorphic as bound quiver algebras, since the
directed quivers are not isomorphic. However, they are hereditary algebras of
the same underlying Dynkin type \(A_3\), and they are derived equivalent by the
classical reflection/tilting construction \cite{BernsteinGelfandPonomarev}.
This illustrates the typical intermediate situation:
\[
  \K Q/I\not\cong \K Q'/I',
  \qquad
  D^b(\K Q/I)\simeq D^b(\K Q'/I').
\]
\end{example}

\subsection{Summary of possible relations}

The preceding discussion gives the following hierarchy.
\[
\boxed{
\begin{array}{c}
(Q,I)\cong (Q',I')\text{ as bound quivers}
\end{array}
\Longrightarrow
\begin{array}{c}
\K Q/I\cong \K Q'/I'
\end{array}}
\]

\[
\boxed{
\begin{array}{c}
\K Q'/I'\cong \operatorname{End}_{D^b(\K Q/I)}(T_k^\pm)
\end{array}
\Longrightarrow
\begin{array}{c}
D^b(\K Q/I)\simeq D^b(\K Q'/I')
\end{array}}
\]

\[
\boxed{
\begin{array}{c}
I'\text{ is obtained from }I\text{ by Construction~}
\ref{constr:gentle-admissible-completion}
\end{array}
\Longrightarrow
\begin{array}{c}
(Q,I)\sim_{\mathrm{gmut}}(Q',I')
\end{array}}
\]

Thus the construction always gives a mutation relation between gentle bound
quivers, provided the local and global tests are satisfied. It gives an
isomorphism only in exceptional symmetric cases, and it gives a derived
equivalence only when the constructed algebra agrees with a genuine tilting or
algebra mutation, or when derived-equivalence invariants confirm that such an
equivalence is possible.

\section{Examples}
\label{sec:examples}

This section illustrates the construction and the comparison results above.
The first examples show successful relation mutations.  The non-examples show
why the degree-gentle hypothesis and the admissibility condition cannot be
omitted.

\begin{example}[Mutating a path with no initial zero-relation]
\label{ex:pathzero}
Let
\[
  Q:
  \qquad
  1\xrightarrow{\alpha}2\xrightarrow{\beta}3,
  \qquad
  I=0.
\]
The quiver is acyclic, so \(I=0\) is admissible, and \(\K Q/I\) is gentle.
Mutate at \(k=2\).  The mutated quiver \(Q'=\mu_2(Q)\) has arrows

\[
  \alpha^*:2\to 1,
  \qquad
  \beta^*:3\to 2,
  \qquad
  \eta:1\to 3,
\]
where \(\eta\) is the composite arrow coming from the initial path
\[
  1\xrightarrow{\alpha}2\xrightarrow{\beta}3.
\]
Hence \(Q'\) is the oriented cycle

\begin{equation}\label{}
\nonumber   \xymatrix{
\cdot_{1} \ar[r]^{\eta} & \ar[dl]^{\beta^*}\cdot_{3}   \\
\cdot_{2}\ar[u]^{\alpha^*}}.
  \end{equation}

Since the initial path \(\beta\alpha\) was permitted, the complement-transpose rule
kills the reversed path through the mutated vertex:
\[
  \alpha^*\beta^*\in I'.
\]
Thus one may take
\[
  I'=\langle \alpha^*\beta^*\rangle.
\]
The unique oriented cycle contains a zero-relation, so \(I'\) is admissible.
The local gentle conditions are automatic because every vertex has one incoming
and one outgoing arrow.  Hence \(\K Q'/I'\) is gentle.
\end{example}

\begin{example}[Mutating a path whose length-two path is killed]
\label{ex:killed-path}
Let
\[
  Q:
  \qquad
  1\xrightarrow{\alpha}2\xrightarrow{\beta}3,
  \qquad
  I=\langle \beta\alpha\rangle.
\]
Then \(\K Q/I\) is gentle.  Mutating again at \(k=2\) gives the same oriented
cycle as in Example~\ref{ex:pathzero}.  Since the initial path \(\beta\alpha\) was
zero, the complement-transpose rule makes the reversed path
\(\alpha^*\beta^*\) permitted.  However, the oriented cycle must still be broken
in order to obtain an admissible ideal.  One possible choice is
\[
  I'=\langle \eta\alpha^*\rangle,
\]
which kills the path
\[
  2\xrightarrow{\alpha^*}1\xrightarrow{\eta}3.
\]
Then \(\K Q'/I'\) is gentle.
\end{example}

\begin{example}[A source mutation]
\label{ex:source-mutation}
Let
\[
  Q:
  \qquad
  1\xrightarrow{a}2,
  \qquad
  1\xrightarrow{b}3,
  \qquad
  2\xrightarrow{c}4,
  \qquad
  I=0.
\]
This is a gentle algebra.  Mutate at the source \(k=1\).  No composite arrows
are created.  The arrows leaving \(1\) are reversed, so \(Q'\) has arrows
\[
  a^*:2\to 1,
  \qquad
  b^*:3\to 1,
  \qquad
  c:2\to 4.
\]
The mutated quiver is acyclic, and one may take \(I'=0\).  Hence
\(\K Q'/I'\) is gentle.
\end{example}

\begin{nonexample}[A source mutation that is not degree-gentle]
\label{nonex:source-not-degree-gentle}
Let
\[
  Q:
  \qquad
  1\to 2,
  \qquad
  2\to 3,
  \qquad
  2\to 4.
\]
This quiver is degree-gentle.  Mutating at the source \(1\) only reverses the
arrow \(1\to 2\).  The mutated quiver has arrows
\[
  2\to 1,
  \qquad
  2\to 3,
  \qquad
  2\to 4.
\]
Now vertex \(2\) has three outgoing arrows.  Hence \(\mu_1(Q)\) is not
degree-gentle, and no choice of ideal on \(\K\mu_1(Q)\) can make the resulting
algebra gentle.
\end{nonexample}

\begin{nonexample}[Degree-gentleness can fail after adding composite arrows]
\label{nonex:composite-degree-failure}
Let
\[
  Q:
  \qquad
  1\to 2,
  \qquad
  2\to 3,
  \qquad
  2\to 4,
  \qquad
  1\to 5.
\]
This quiver is degree-gentle.  Mutating at \(2\) creates arrows \(1\to 3\) and
\(1\to 4\), reverses the arrows incident with \(2\), and leaves \(1\to 5\)
unchanged.  Thus vertex \(1\) has outgoing arrows to \(3\), \(4\), and \(5\),
so
\[
  \deg^+_{\mu_2(Q)}(1)=3.
\]
Again the mutated quiver cannot underlie a gentle algebra.
\end{nonexample}

\begin{example}[A mutation relation that is not an isomorphism]
The algebras in Example~\ref{ex:pathzero} are gentle-mutation-related, but
they are not isomorphic.  Indeed, the original quiver is a directed path with
two arrows, while the mutated quiver is an oriented 3-cycle.  Thus the
construction ~\ref{constr:gentle-admissible-completion} produces a new gentle algebra in the same quiver-mutation class,
but not the same algebra.
\end{example}

\begin{example}[A derived-equivalent source mutation]
The source mutation in Example~\ref{ex:source-mutation} is hereditary on the
part of the quiver affected by the mutation.  In such source or sink cases,
classical reflection functors often produce derived equivalences.  This shows
that the relation produced by gentle mutation may sometimes be stronger than a
purely combinatorial mutation relation, although such a derived equivalence is
not automatic in general.
\end{example}

\section{Questions and Conclusion}

\subsection{Questions for further study}

The preceding discussion motivates the following questions.

\begin{question}[Mutation-closed gentle classes]
Classify the gentle bound quivers \((Q,I)\) for which every mutation sequence
that preserves degree-gentleness can be accompanied by choices of relations so
that every resulting bound quiver remains gentle.
\end{question}

\begin{question}[Relation-sensitive rooted mutation groups]
For a gentle bound quiver \((Q,I)\), determine the subgroup
\[
  M_{\mathrm{rel}}(Q,I)\subseteq M(Q).
\]
When is this subgroup equal to \(M(Q)\), and when is it proper?
\end{question}

\begin{question}[Fiber mutation groups]
Let \(R=Q\otimes Q'\).  How do the rooted mutation groups of the row and
column subcluster structures compare with \(M(Q')\) and \(M(Q)\)?  More
precisely, when do mutation loops supported on a single row or column extend
to relation-preserving rooted loops of the full product quiver?
\end{question}

\begin{question}[Product-preserving mutation]
Classify mutation sequences \(\mu\) such that
\[
  \mu(Q\otimes Q')
\]
is again recognizable as an internal product.  How does this condition interact
with the existence of gentle ideals on the intermediate mutated quivers?
\end{question}

\begin{question}[Valued quivers and species]
For valued quivers, the ordinary path algebra should be replaced by a species
or a modulated quiver algebra.  Is there a valued or species-theoretic analogue
of the local matching and admissibility criteria developed in this paper?
\end{question}

\subsection{Conclusion}

Quiver mutation alone does not determine a mutation of gentle algebras.  The
reason is that a gentle algebra is not only a quiver, but a bound quiver
algebra
\[
  A=\K Q/I,
\]
where \(I\) is an admissible ideal generated by quadratic monomial
zero-relations.  Mutating \(Q\) produces a new quiver \(\mu_k(Q)\), but it does
not canonically specify the new ideal.

The results of this paper separate the problem into two parts.  The first part
is purely quiver-theoretic.  The mutated quiver must remain degree-gentle,
meaning that every vertex has at most two incoming and at most two outgoing
arrows.  Proposition~\ref{prop:degree-criterion} gives a local criterion for
this condition in the finite simple \(2\)-acyclic case.  The second part is
relation-theoretic.  Once \(\mu_k(Q)\) is degree-gentle, one must choose
quadratic zero-relations so that the local gentle matching conditions hold and
so that the resulting ideal is admissible.  Proposition~\ref{prop:matching}
provides the local test, while Proposition ~\ref{prop:admissibility} gives the
global admissibility test using the permitted transition graph.

The constructions introduced in this paper give practical ways to produce such
ideals.  Construction~\ref{constr:admissibilityfirst} prioritizes
admissibility, while Construction  ~\ref{constr:gentle-admissible-completion}
prioritizes completing the local gentle conditions.  Together, they show that
although mutation of gentle algebras is not automatic, it can often be carried
out by combining local degree control, local relation matching, and global
cycle-breaking conditions.

The final sections also clarify what kind of relation should be expected
between the original algebra \(\K Q/I\) and a mutated gentle algebra
\[
  \K\mu_k(Q)/I'.
\]
In general, the two algebras need not be isomorphic, and they need not be
derived equivalent.  Isomorphism occurs only in special cases where the bound
quivers agree up to isomorphism.  Derived equivalence may occur when the
constructed relation mutation agrees with a tilting or algebra mutation, but it
is not guaranteed by the construction itself.

This point is important for the connection with cluster theory.  At the level
of quivers, \(Q\) and \(\mu_k(Q)\) lie in the same mutation class.  At the
level of gentle algebras, however, the admissible ideal carries additional
information.  This suggests refining rooted mutation groups by introducing
relation-sensitive rooted mutation groups \(M_{\mathrm{rel}}(Q,I)\), consisting
of mutation loops that preserve the bound quiver structure and not merely the
underlying cluster quiver.

Overall, the paper provides a framework for studying mutation classes of
gentle bound quivers.  The main contribution is not a claim that gentleness is
mutation-invariant, but rather a set of precise local and global tests
describing when gentleness can be restored after mutation by a compatible
choice of relations.  These tests open the way to further study of
mutation-closed gentle classes, relation-sensitive rooted mutation groups,
and tensor-product constructions for gentle and valued quiver algebras.

\subsection*{Acknowledgments} I would like to thank Eman Alluqmani for introducing me to the topic of gentle algebras.

\end{document}